\newtheorem{thm}{Theorem}[section]
\newtheorem{cor}[thm]{Corollary}
\newtheorem{prop}[thm]{Proposition}
\theoremstyle{plain}
\theoremstyle{definition}
\newtheorem{defn}[thm]{Definition}
\theoremstyle{remark}
\newcommand\blfootnote[1]{%
	\begingroup
	\renewcommand\thefootnote{}\footnote{#1}%
	\addtocounter{footnote}{-1}%
	\endgroup
}
\newcommand {\Id} {\operatorname{Id}}
\newcommand{\N}{\mathbb{N}}
\newcommand{\R}{\mathbb{R}}
\title{A remark on approximation with polynomials and greedy bases}
\date{}
\begin{document}
	\author{Pablo M. Bern\'a, Antonio P\'erez}
	\address{Pablo M. Bern\'a
		\\
		Departmento de Matem\'aticas
		\\
		Universidad Aut\'onoma de Madrid
		\\
		28049 Madrid, Spain} \email{pablo.berna@uam.es}
	
	\address{Antonio P\'erez
		\\
		Instituto de Ciencias Matem\'aticas (CSIC-UAM-UC3M-UCM),
		\\
		C/ Nicol\'as Cabrera
13-15, Campus de Cantoblanco
		\\
		28049 Madrid, Spain} \email{antonio.perez@icmat.es}
	\maketitle

	\begin{abstract}
We investigate properties of the $m$-th error of approximation by polynomials with constant coefficients $\mathcal{D}_{m}(x)$ and with modulus-constant coefficients $\mathcal{D}_{m}^{\ast}(x)$ introduced by Bern\'a and Blasco (2016) to study greedy bases in Banach spaces. We characterize when $\liminf_{m}{\mathcal{D}_{m}(x)}$ and $\liminf_{m}{\mathcal{D}_{m}^*(x)}$ are equivalent to $\| x\|$ in terms of the democracy and superdemocracy functions, and provide sufficient conditions ensuring that $\lim_{m}{\mathcal{D}_{m}^*(x)} = \lim_{m}{\mathcal{D}_{m}(x)} = \| x\|$, extending previous very particular results.
\end{abstract}

	\blfootnote{\hspace{-0.031\textwidth} 2000 Mathematics Subject Classification. 46B15, 41A65.\newline
	\textit{Key words and phrases}: thresholding greedy algorithm, greedy bases, almost greedy bases.\newline
	The first author was supported by a PhD fellowship of the program ``Ayudas para contratos predoctorales para la formación de doctores 2017" (MINECO, Spain) and the grants MTM-2016-76566-P (MINECO, Spain) and 19368/PI/14 (\emph{Fundaci\'on S\'eneca}, Regi\'on de Murcia, Spain). Also, the first author would like to thank the Isaac Newton Institute for Mathematical Sciences, Cambridge, for  hospitality during the program Approximation, Sampling and Compression in Data Science where some work on this paper was undertaken. The second author is acknowledges financial support from the Spanish Ministry of
Economy and Competitiveness, through the ``Severo Ochoa Programme for Centres of Excellence in R\&D'' (SEV-2015-0554).}
	
	\begin{section}{Introduction}\label{section1}
		Let $(\mathbb X,\Vert \cdot \Vert)$ be a real Banach space and let  $\mathcal{B}=(e_n)_{n=1}^{\infty}$ be a semi-normalized  (Schauder) basis of $\mathbb X$ with biorthogonal functionals $(e_n^{*})_{n=1}^{\infty}$, that is:
		\begin{enumerate}\itemsep0.3em 
		\item[(i)] There exist $a,b > 0$ such that $a  \leq \| e_{n}\|, \|e_{n}^{\ast}\| \leq b$ for every $n \in \N$,
		\item[(ii)] $e_{k}^{\ast}(e_{n}) = \delta_{k n}$ for every $k,n \in \N$,
		\item[(iii)] The sequence of projections 
	$P_{m}: \mathbb{X} \longrightarrow \mathbb{X}$ given by
\[ P_{m}(x) = \sum_{n=1}^{m}{e_{n}^{\ast}(x) \, e_{n}}\,\,, \quad x \in \mathbb{X}\,\]
satisfy $\lim_{n}{\|P_{m}(x) - x\|} = 0$ for every $x \in \mathbb{X}$. In this case, the \emph{basis constant} of $\mathcal{B}$ is
\[ K_{b}:=\sup_{m \in \N} \| P_{m}\| < \infty\,. \] 
We say that $\mathcal{B}$ is \emph{monotone} whether $K_{b} = 1$.
		\end{enumerate}
		Along the paper we will refer to every such $\mathcal{B}$ simply as a \emph{basis}. Of course, as $m$ increases $P_{m}(x)$ offers a good approximation of $x$ by linear combinations of $m$-elements of the basis, but it is natural to ask whether a suitable (and systematic) rearrangement can provide better convergence rates. A natural proposal is the  \textit{Thresholding Greedy Algorithm} (TGA) introduced by S. V. Konyagin and V. N. Temlyakov (\cite{KT}): given $x \in \mathbb{X}$ we first consider the rearranging function $\rho: \N \longrightarrow \N$ satisfying that if $j<k$ then either $\vert e_{\rho(j)}^*(x)\vert > \vert e_{\rho(k)}^*(x)\vert$ or $\vert e_{\rho(j)}^*(x)\vert = \vert e_{\rho(k)}^*(x)\vert$ and $\rho(j)<\rho(k)$. The $m$-\textit{th greedy sum} of $x$ is then $$\mathcal{G}_m(x) \; = \; \sum_{j=1}^m e_{\rho(j)}^*(x) \, e_{\rho(j)}  \; = \; \sum_{k \in \Lambda_m(x)} e_k^*(x) e_k\,,$$
where $\Lambda_m(x) = \{ \rho(n) : n \leq m\}$
	is the \textit{greedy set} of $x$ with cardinality $m$. Related to this, S. V. Konyagin and V. N. Temlyakov defined in \cite{KT}  the concepts of \textit{greedy} and \textit{quasi-greedy} bases.
	 \begin{defn}
	 We say that $\mathcal B$ is \emph{quasi-greedy} if there exists a positive constant $C_q$ such that $$\Vert x-\mathcal G_m(x)\Vert \leq C_q\Vert x\Vert,\; \forall x\in\mathbb X, \forall m\in\mathbb N.$$
	 \end{defn}
P. Wojtaszczyk proved in \cite{Woj} that quasi-greediness is equivalent to the convergence of the algorithm, that is, $\mathcal B$ is quasi-greedy if and only if 
 $$\lim_{m\rightarrow+\infty}\Vert x-\mathcal G_m(x)\Vert = 0,\; \forall x\in\mathbb X.$$
   
	 \begin{defn}\label{defn:greedy}
	 We say that $\mathcal B$ is \emph{greedy} if there exists a positive constant $C$ such that
	 	\begin{equation}\label{equa:defnGreedyAux1}
	 	\Vert x-\mathcal G_m(x)\Vert \leq C\sigma_m(x),\; \forall x\in\mathbb X, \forall m\in\mathbb N,
	 	\end{equation}
	 	where
	 	$$\sigma_m(x,\mathcal B)_{\mathbb X}=\sigma_m(x):=\inf\left\lbrace \left\Vert x-\sum_{n\in A}a_n e_n\right\Vert : a_n\in\mathbb F, A\subset\mathbb N, \vert A\vert= m\right\rbrace.$$
	 \end{defn}
	Konyagin and Temlykov \cite{KT} showed that, although every greedy basis is quasigreedy, the converse does not holds (see also \cite[Section 10.2]{AK}). They also characterize greedy bases as those which are unconditional and democratic. To define the last notion we have to introduce some notation. For each finite subset $A \subset \N$ and every scalar sequence $\varepsilon = (\varepsilon_{n})$ with $|\varepsilon_{n}| = 1$ for each $n \in \N$ (from now on we will write $|\varepsilon| = 1$, for simplicity) let us denote
	\[ \mathbf{1}_{A} := \sum_{n \in A}{e_{n}} \quad \quad \mbox{ and } \quad \quad \mathbf{1}_{\varepsilon A}:= \sum_{n \in A}{\varepsilon_{n} \, e_{n}}\,. \]
As usual,  $|A|$ stands for the cardinal of  $A$. We then define the \emph{democracy functions}	as
\[ h_{l}(m) = \inf_{|A| = m, |\varepsilon| = 1}{\| \mathbf{1}_{\varepsilon A}\|} \quad \mbox{, } \quad   h_{r}(m) = \sup_{|A| = m, |\varepsilon| = 1}{\| \mathbf{1}_{\varepsilon A}\|}\, \quad\quad  (m \in \N)\,. \]
and the \emph{superdemocracy functions} as
\[ h_{l}^*(m) = \inf_{|A| = m, |\varepsilon| = 1}{\| \mathbf{1}_{\varepsilon A}\|} \quad \mbox{, } \quad   h_{r}^*(m) = \sup_{|A| = m, |\varepsilon| = 1}{\| \mathbf{1}_{\varepsilon A}\|}\, \quad\quad  (m \in \N)\,. \]
	
	\begin{defn}
	We say that $\mathcal B$ is \emph{democratic} (resp. \emph{superdemocratic}) if there exists $C > 0$ such that $h_{r}(m) \leq C \, h_{l}(m)$ (\, resp. $h_{r}^*(m) \leq C \, h_{l}^*(m)$ \,) for every $m \in \N$.
	\end{defn}
	
	Another characterization of greedy bases was more recently provided by \'O. Blasco and the first author by means of the \emph{best $m$-th error in the approximation using polynomials of constant $($resp. modulus-constant$)$ coefficients}: 
\begin{eqnarray*}
\mathcal D_m(x,\mathcal B)_{\mathbb X} & = & \mathcal D_m(x)=\inf\lbrace\Vert x-\alpha \mathbf{1}_{A}\Vert : \alpha\in\mathbb R, \, A\subset\mathbb N, \, \vert A\vert=m\rbrace\\[1mm]
\mathcal{D}_m^*(x,\mathcal B)_{\mathbb X} & = & \mathcal{D}_m^*(x)=\inf\lbrace\Vert x-\alpha \mathbf{1}_{\varepsilon A}\Vert : \alpha\in\mathbb R, \,  A\subset\mathbb N, \, \vert A\vert=m, \vert\varepsilon\vert=1\rbrace
\end{eqnarray*}	
	
	\begin{thm}{\cite[Corollary 1.8]{BB}}\label{bb}
Let $\mathcal{B}$ be a basis of a Banach space $\mathbb{X}$. The following assertions are equivalent:
	\begin{enumerate}\itemsep0.3em
	\item[(i)] $\mathcal{B}$ is greedy;
	\item[(ii)] There is $C>0$ such that $\Vert x-\mathcal G_m(x)\Vert\leq C\,  \mathcal{D}_m(x)$ for every $x \in \mathbb{X}$ and $m \in \N$. 
    \item[(iii)] There is $C>0$ such that $\Vert x-\mathcal G_m(x)\Vert\leq C \, \mathcal{D}_m^\ast(x)$ for every $x \in \mathbb{X}$ and $m \in \N$. 
	\end{enumerate}
	\end{thm}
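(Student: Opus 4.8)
The plan is to prove the cycle $(i)\Rightarrow(iii)\Rightarrow(ii)\Rightarrow(i)$. The first two implications are immediate from the pointwise inequalities
\[
\sigma_m(x)\le\mathcal D_m^*(x)\le\mathcal D_m(x)\qquad(x\in\X,\ m\in\N),
\]
which hold because the class of admissible approximants shrinks as we pass from arbitrary coefficients (for $\sigma_m$) to modulus-constant coefficients (for $\mathcal D_m^*$) to constant coefficients (for $\mathcal D_m$). Thus if $\mathcal B$ is greedy then $\|x-\mathcal G_m(x)\|\le C\sigma_m(x)\le C\mathcal D_m^*(x)$, which is $(iii)$, and $(iii)\Rightarrow(ii)$ since $\mathcal D_m^*\le\mathcal D_m$. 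All the substance is therefore in $(ii)\Rightarrow(i)$, which I would obtain by showing that $(ii)$ forces $\mathcal B$ to be simultaneously unconditional and democratic and then invoking the Konyagin--Temlyakov characterization (greedy $=$ unconditional $+$ democratic).

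For a finite set $A$ write $S_A x=\sum_{n\in A}e_n^*(x)\,e_n$. To extract unconditionality I would test $(ii)$ on vectors $x=\alpha\,\mathbf{1}_{A}+u$, where $u$ is an arbitrary finitely supported vector and $\alpha>0$ is chosen large, say $\alpha>2\sup_n\|e_n^*\|\,\|u\|$. For such $\alpha$ the $|A|$ coordinates of largest modulus of $x$ are exactly those in $A$, so the greedy set of order $|A|$ is $A$ and $\mathcal G_{|A|}(x)=S_A x=\alpha\,\mathbf{1}_{A}+S_A u$; hence $x-\mathcal G_{|A|}(x)=u-S_Au=S_{A^c}u$. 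On the other hand, using the constant-coefficient polynomial $\alpha\,\mathbf{1}_{A}$ itself as a competitor gives $\mathcal D_{|A|}(x)\le\|x-\alpha\,\mathbf{1}_{A}\|=\|u\|$. Feeding both into $(ii)$ yields the key inequality $\|S_{A^c}u\|\le C\,\|u\|$, valid for every finite $A$ and every finitely supported $u$; taking $A=\operatorname{supp}(u)\setminus B$ for arbitrary $B\subseteq\operatorname{supp}(u)$ turns this into $\|S_B u\|\le C\,\|u\|$, i.e.\ the suppression projections are uniformly bounded, which over $\R$ is precisely unconditionality of $\mathcal B$. I expect this to be the main obstacle, and the crux is the deliberate mismatch between what the greedy algorithm strips off (the boost $\alpha\,\mathbf{1}_A$ \emph{together with} $S_Au$) and what the optimal constant-coefficient polynomial strips off (only the boost), which is exactly what exposes the unbounded suppression behaviour that $(ii)$ must prohibit. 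Some care is needed to break ties so that the greedy set is genuinely $A$ (perturb $\alpha$, or rely on the tie-breaking built into $\rho$).

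Democracy I would obtain from disjoint test vectors: for disjoint $A,B$ with $|A|=|B|=m$ and $\delta>0$, the vector $x=\mathbf{1}_{A}+(1+\delta)\mathbf{1}_{B}$ has greedy set $B$ of order $m$, so $x-\mathcal G_m(x)=\mathbf{1}_{A}$, while the competitor $\alpha=1$ on $A$ gives $\mathcal D_m(x)\le\|x-\mathbf{1}_{A}\|=(1+\delta)\|\mathbf{1}_{B}\|$; then $(ii)$ gives $\|\mathbf{1}_{A}\|\le C(1+\delta)\|\mathbf{1}_{B}\|$, and letting $\delta\to0$ and swapping $A$ and $B$ yields $h_r(m)\le C\,h_l(m)$ (the overlapping case reduces to the disjoint one by interposing a third set of the same size supported away from $A\cup B$). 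Quasi-greediness, if wanted separately, is immediate by taking $\alpha=0$, since $\mathcal D_m(x)\le\|x\|$ forces $\|x-\mathcal G_m(x)\|\le C\|x\|$. With $\mathcal B$ now unconditional and democratic, Konyagin--Temlyakov gives that $\mathcal B$ is greedy, closing the cycle. The implication $(iii)\Rightarrow(i)$ could alternatively be run by the same scheme, where the modulus-constant polynomials underlying $\mathcal D_m^*$ let one cancel sign-matched blocks directly, making the unconditionality step even more transparent.
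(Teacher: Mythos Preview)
The paper does not supply its own proof of this statement; Theorem~\ref{bb} is simply quoted from \cite[Corollary~1.8]{BB} as background and motivation for the results of Sections~\ref{section2}--\ref{section4}, so there is nothing in the present paper to compare your argument against.

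That said, your proof is correct and follows what is essentially the standard route. The chain $\sigma_m\le\mathcal D_m^*\le\mathcal D_m$ dispatches $(i)\Rightarrow(iii)\Rightarrow(ii)$ for free, and for $(ii)\Rightarrow(i)$ you reduce to the Konyagin--Temlyakov characterization by testing on boosted vectors $\alpha\,\mathbf{1}_A+u$ (unconditionality) and on $\mathbf{1}_A+(1+\delta)\,\mathbf{1}_B$ with the third-set trick (democracy). One small remark: the tie-breaking concern you flag in the unconditionality step is in fact harmless, because once $\alpha>2\sup_n|e_n^*(u)|$ every coordinate in $A$ strictly dominates every coordinate outside $A$, so the greedy set of order $|A|$ is forced to equal $A$ irrespective of how ties \emph{within} $A$ are resolved; no perturbation is needed.
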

	
	
The striking feature of this theorem compared to \eqref{equa:defnGreedyAux1} is that, while $\lim_{m}\sigma_{m}(x) = 0$ for every $x \in \mathbb{X}$, the terms $\mathcal{D}_{m}^{\ast}(x)$ and $\mathcal{D}_{m}(x)$ do not necessarily converge to zero if $x \neq 0$. Indeed, we have the following examples:

\begin{itemize}\itemsep0.3em
\item[$\rhd$] {\cite[Theorem 3.2]{BB}},{\cite[Theorem 1.4]{BB2}} If $\mathbb{X} = \mathbb{H}$ is a (separable) Hilbert space and $\mathcal{B}$ is an orthonormal basis, then
		\begin{equation}\label{equa:precedent1}
		\lim_{m\rightarrow \infty}\mathcal D_m(x) = \lim_{m\rightarrow \infty}\mathcal D_m^*(x)  = \Vert x\Vert\,, \quad \mbox{ for every } x\in\mathbb H.
		\end{equation}
\item[$\rhd$] \cite[Proposition 3.4]{BB} If $\mathbb{X} = \ell^{p}$ ($1 < p <\infty$) and $\mathcal{B}$ is the canonical basis, then 
		\begin{equation} \label{equa:precedent2}
		\lim_{m \rightarrow +\infty}{\mathcal{D}_{m}(\mathbf{1}_{B})} = \lim_{m \rightarrow +\infty}{\mathcal{D}_{m}^*(\mathbf{1}_{B})} = \| \mathbf{1}_{B}\|  \,, \quad \mbox{ for every finite }B \subset \N\,.
		\end{equation}
\end{itemize}
	
In the present paper, we aim to delve into this aspect.
Let us briefly explain the structure of the paper. In Section \ref{section2} we show that $\mathcal{D}_{m}^{\ast}(x)$ and $\mathcal{D}_{m}(x)$ do not converge to zero as $m \rightarrow + \infty$ for any $x \neq 0$. In Section 	\ref{section3} we prove the main result  of the paper (Theorem \ref{thm:main}), namely a characterization of those bases $\mathcal{B}$ for which there is a positive constant $c>0$ such that
\[ c \| x\| \leq \liminf_{m \rightarrow + \infty}{\mathcal{D}_{m}^{\ast}(x)} \leq \limsup_{m \rightarrow + \infty}{\mathcal{D}_{m}^{\ast}(x)} \leq \| x\| \quad \quad \mbox{ for every }x \in \mathbb{X}\,, \]
in terms of the democracy and superdemocracy functions.
We also provide a quite general condition ensuring that
\[ \lim_{m \rightarrow + \infty}{\mathcal{D}^*_{m}(x)} = \| x\| \quad \mbox{ for every } x \in \mathbb{X}\,. \]
In Section \ref{section4} we deal with the notion of almost-greedy bases. We study how this property can be also characterized in terms of polynomials of constant or modulus-constant coefficients, extending a recent result of S. J. Dilworth and D. Khurana in \cite{DK1}.\\ 

Let us point out \cite{AK} as our basic reference for notation and fundamental results on greedy basis.
	\end{section}

\section{The limit of errors $\mathcal D_m^*(x)$ and $\mathcal D_m(x)$ is nonzero}\label{section2}
Since $\mathcal D_m^*(x)\leq \mathcal D_m(x) \leq \| x\|$ for every $m\in\mathbb N$ and every $x\in\mathbb X$, it is only necessary to study lower bounds of $\mathcal D_m^*(x)$.

\begin{prop}\label{prop:nonzeroLimitError}
Let $\mathcal{B} = (e_{n})_{n=1}^{\infty}$ be a basis of a Banach space $\mathbb{X}$. Then, for every $x \in \mathbb{X}$ 
\[ \frac{1}{4 K_{b}} \, \sup_{n \in \N}| e_{n}^{\ast}(x)| \, \leq \,  \liminf_{m \rightarrow \infty} \mathcal{D}_{m}^{\ast}(x)\,. \]
\end{prop}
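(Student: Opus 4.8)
The plan is to establish the inequality one coordinate at a time: I will show that for each fixed $n \in \N$ one has $\tfrac{1}{4K_b}\,|e_n^{\ast}(x)| \le \liminf_{m\to\infty}\mathcal{D}_m^{\ast}(x)$, and then, since the right-hand side does not depend on $n$, take the supremum over $n$ on the left to obtain the stated bound. So fix $n$ and an arbitrary admissible competitor $\alpha\,\mathbf{1}_{\varepsilon A}$ with $\alpha \in \R$, $|A| = m$ and $|\varepsilon| = 1$, and write $y = x - \alpha\,\mathbf{1}_{\varepsilon A}$. The basic device for isolating a single coordinate is the identity $e_k^{\ast}(y)\,e_k = (P_k - P_{k-1})(y)$ together with $\|P_k - P_{k-1}\| \le 2K_b$, which gives $\|y\| \ge \tfrac{1}{2K_b}\,\|e_k^{\ast}(y)\,e_k\|$ for every $k$. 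Thus it will suffice to produce, for a well-chosen index $k$, a lower bound of the form $\|e_k^{\ast}(y)\,e_k\| \gtrsim \tfrac12\,|e_n^{\ast}(x)|$, the two factors of $2$ accounting for the constant $4K_b$.

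I would then split according to the size of the single free scalar $\alpha$. If $|\alpha| \le \tfrac12\,|e_n^{\ast}(x)|$, I use the coordinate $k = n$ itself: whether or not $n \in A$, one has $|e_n^{\ast}(y)| \ge |e_n^{\ast}(x)| - |\alpha| \ge \tfrac12\,|e_n^{\ast}(x)|$, so the projection estimate yields $\|y\| \ge \tfrac{1}{4K_b}\,|e_n^{\ast}(x)|$ immediately; this case requires no largeness of $m$.

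The genuine obstacle is the complementary case $|\alpha| > \tfrac12\,|e_n^{\ast}(x)|$, in which the competitor may have tuned $\alpha$ precisely to annihilate the $n$-th coordinate of $x$, so the coordinate $n$ is useless. Here I exploit simultaneously that $\alpha$ is now \emph{large} and that $m$ is \emph{large}. Since $x$ has a convergent expansion, its coefficients satisfy $e_j^{\ast}(x) \to 0$; hence, for any prescribed $\eta > 0$, only finitely many indices $j$ obey $|e_j^{\ast}(x)| \ge \eta$, and so once $m$ exceeds this finite count the set $A$ must contain some index $p$ with $|e_p^{\ast}(x)| < \eta$. For such $p \in A$ one gets $|e_p^{\ast}(y)| = |e_p^{\ast}(x) - \alpha\varepsilon_p| \ge |\alpha| - |e_p^{\ast}(x)| > \tfrac12\,|e_n^{\ast}(x)| - \eta$, and the projection estimate now applied at $k = p$ gives $\|y\| \ge \tfrac{1}{2K_b}\bigl(\tfrac12\,|e_n^{\ast}(x)| - \eta\bigr)$.

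Taking the infimum over all admissible competitors and then letting $m \to \infty$ combines the two cases into $\liminf_{m\to\infty}\mathcal{D}_m^{\ast}(x) \ge \tfrac{1}{2K_b}\bigl(\tfrac12\,|e_n^{\ast}(x)| - \eta\bigr)$ for every $\eta > 0$; letting $\eta \to 0$ yields the per-coordinate bound and, upon taking the supremum over $n$, the proposition. The only delicate point is precisely the second case: it is here that the liminf and the growth of $m$ are indispensable, since they are what allow the auxiliary error $|e_p^{\ast}(x)|$ coming from the fresh index $p \in A$ to be pushed below any threshold, so that the large value of $|\alpha|$ is turned into a genuine lower bound on $\|y\|$ rather than being absorbed.
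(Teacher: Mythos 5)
Your argument is correct and is essentially the paper's own proof: both rest on the single-coordinate estimate $\|y\| \geq \|e_k^{\ast}(y)\,e_k\|/(2K_b)$ (via $P_k - P_{k-1}$) applied once at $k=n$ and once at a ``fresh'' index of $A$ whose coefficient in $x$ lies below a threshold (available as soon as $m$ is large, since $e_j^{\ast}(x)\to 0$), and both then eliminate the unknown $\alpha$ by playing these two bounds against each other --- you via a dichotomy on $|\alpha|$, the paper by summing the two bounds and using the triangle inequality, each yielding the same constant $4K_b$. The only caveat, which you share with the paper, is that passing from $\|e_k^{\ast}(y)\,e_k\|$ to $|e_k^{\ast}(y)|$ implicitly uses $\|e_k\|\geq 1$; for a merely semi-normalized basis the constant should be adjusted by $\inf_k \|e_k\|$.
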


\begin{proof}
Let $x \in \mathbb{X}$. Note that for every finite set $A \subset \N$, $\alpha \in \R$ and $|\varepsilon| = 1$ it holds that
\[ \|  x - \alpha \mathbf{1}_{\varepsilon A}\|  \geq \sup_{n \in \N} \frac{|e_{n}^{\ast}( x - \alpha \mathbf{1}_{\eta A})|}{\| e_{n}^{\ast}\|} \geq \frac{\sup_{n \in \N}{|e_{n}^{\ast}( x - \alpha \mathbf{1}_{\varepsilon A})|}}{2 K_{b}} \,  \geq \frac{\sup_{n \in \N} \big| |e_{n}^{\ast}(x)| - |\alpha| \big|}{2K_{b}} \,  \,. \]
Let us also fix $\delta > 0$ and $n_{0} \in \N$ with the property that
\[ |e_{n}^{\ast}(x)| \leq \delta \quad \mbox{ for every } n \geq n_{0}\,. \] 
If $A$ satisfies $|A| > n_{0}$, then there is $j \in A$ with $j > n_{0}$, and so 
\[ \| x - \alpha \mathbf{1}_{\varepsilon A}\| \geq  \frac{| e_{j}^{\ast}(x) - |\alpha| |}{2 K_{b}} \geq \frac{| |\alpha| - \delta |}{2 K_{b}} \,.  \]
In particular, combining both lower estimations we get that for $|A| > n_{0}$
\begin{eqnarray*}
\| x - \alpha \mathbf{1}_{\varepsilon A}\| \geq \frac{| |\alpha| - \delta | + \sup_{n \in \N} \big| |e_{n}^{\ast}(x)| - |\alpha| \big|}{4 K_{b}} \geq \sup_{n \in \N}{\frac{|e_{n}^{\ast}(x)| - \delta}{ 4 K_{b}}}\,.
\end{eqnarray*}
Therefore, for $m > n_{0}$
\[  \mathcal{D}_{m}^{\ast}(x) \geq \sup_{n \in \N}{\frac{|e_{n}^{\ast}(x)| - \delta}{ 4 K_{b}}}\,. \]
\end{proof}

\section{Main result: equivalence with the norm} \label{section3}

The issue of when $\liminf_{m}{\mathcal{D}^{\ast}_{m}(x)}$ (resp. $\liminf_{m}{\mathcal{D}_{m}(x)}$) is equivalent to $\| x\|$ is going to be determined by the behaviour of the superdemocracy functions (resp. democracy functions),  see Section \ref{section1} for the definitions. Along the present section we are going to focus on proving the results for superdemocracy case, namely for $h_{l}^\ast(m)$,  $h_{r}^\ast(m)$ and the error $\mathcal{D}^{\ast}_{m}(x)$. The arguments for the case $h_{l}(m)$,  $h_{r}$ and the error $\mathcal{D}_{m}(x)$ are completely analogous. First of all, we recall a trivial estimates of the superdemocray functions for any basis:
\[  h_{l}^{\ast}(k) \leq K_{b} \, h_{l}^*(m)\,, \quad h_{r}^{\ast}(k) \leq K_{b} \, h_{r}^{\ast}(m)\quad  \mbox{ for every } k \leq m\,. \]
These relations together with the trivial inequality $h_{l}^{\ast}(m) \leq h_{r}^{\ast}(m)$ $(m \in \N)$ yield that there are three possible cases:
\begin{itemize}\itemsep0.2em
\item[$\rhd$] $h_{l}^{\ast}(m)$ and $h_{r}^{\ast}(m)$ are bounded.
\item[$\rhd$] $h_{l}^\ast(m)$ is bounded and $h_{r}^{\ast}(m) \rightarrow + \infty$ as $m \rightarrow + \infty$.
\item[$\rhd$] $h_{l}^\ast(m), h_{r}^{\ast}(m) \rightarrow + \infty$ as $m \rightarrow + \infty$.
\end{itemize}

\begin{defn}
The functions $h_{l}^\ast(m)$ and $h_{r}^\ast(m)$ (resp. $h_{l}(m)$ and $h_{r}(m)$) are said to be \emph{comparable} if they are both bounded or divergent to infinity.
\end{defn}

The main result of the section is the following theorem.

\begin{thm}\label{thm:main}
Let $\mathcal{B}$ be a basis of a Banach space $\mathbb{X}$. The following assertions are equivalent:
\begin{enumerate}
\item[(i)] There is a positive constant $c> 0$ such that
\[ c \, \| x\|  \leq \liminf_{m \rightarrow + \infty}{\mathcal{D}^*_{m}(x)} \leq \limsup_{m \rightarrow +\infty}{\mathcal{D}^*_{m}(x)} \leq \| x\| \quad \mbox{ for every } x \in \mathbb{X}. \]
\item[(ii)] $h_{l}^*(m)$ and $h_{r}^*(m)$ are comparable. 
\end{enumerate}
Moreover, if $\mathcal{B}$ is monotone and $h_{l}^*(m) \rightarrow +\infty$ as $m \rightarrow + \infty$, then
\begin{equation}\label{equathmMainAux1} 
\lim_{m \rightarrow +\infty}{\mathcal{D}^*_{m}(x)} = \| x\|\,.\\[1mm] 
\end{equation}
$($The theorem also holds if we replace $\mathcal{D}_{m}^\ast(x)$, $h_{l}^*(m)$, $h_{r}^*(m)$ respectively by $\mathcal{D}_{m}(x)$, $h_{l}(m)$, $h_{r}(m)$.$)$ 
\end{thm}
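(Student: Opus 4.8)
The plan is to prove the equivalence, noting first that the upper estimate $\limsup_m \mathcal{D}_m^*(x) \le \|x\|$ in (i) is automatic (take $\alpha = 0$), so the entire content of (i) is the lower bound $c\|x\| \le \liminf_m \mathcal{D}_m^*(x)$; equivalently, (i) asserts that $\inf_{x \neq 0} \liminf_m \mathcal{D}_m^*(x)/\|x\| > 0$. Recalling the trichotomy listed before the statement, comparability rules out exactly the middle case ($h_l^*$ bounded, $h_r^* \to \infty$). I would therefore prove (ii)$\Rightarrow$(i) by treating the two comparable cases, and (i)$\Rightarrow$(ii) by its contrapositive, building a bad vector out of the non-comparable case.

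For (ii)$\Rightarrow$(i) I combine two elementary lower bounds for $\|x - \alpha\mathbf{1}_{\varepsilon A}\|$ with $|A| = m$ large. The ``large $|\alpha|$'' regime is handled exactly as in Proposition \ref{prop:nonzeroLimitError}: fixing $\delta$ and $n_0$ with $|e_n^*(x)| \le \delta$ for $n \ge n_0$, any $A$ with $|A| = m > n_0$ meets $(n_0,\infty)$, whence $\|x - \alpha\mathbf{1}_{\varepsilon A}\| \ge (|\alpha| - \delta)/(2K_b)$. The ``small $|\alpha|$'' regime is where comparability enters. If both functions are bounded, say $h_r^*(m) \le M$ for all $m$, I use $\|x - \alpha\mathbf{1}_{\varepsilon A}\| \ge \|x\| - |\alpha| M$; optimizing the maximum of the two bounds over $|\alpha|$ and letting $\delta \to 0$ yields $\liminf_m \mathcal{D}_m^*(x) \ge \|x\|/(2K_b M + 1)$. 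If both diverge, I instead combine the projection bound $\|x - \alpha\mathbf{1}_{\varepsilon A}\| \ge \tfrac{1}{K_b}(\|P_N(x)\| - |\alpha| h_r^*(N))$ with the reverse triangle inequality $\|x - \alpha\mathbf{1}_{\varepsilon A}\| \ge |\alpha| h_l^*(m) - \|x\|$; splitting at $|\alpha| = 2\|x\|/h_l^*(m)$ (which tends to $0$), choosing $N$ with $\|P_N(x)\| \ge \|x\| - \epsilon$, and sending $m \to \infty$ gives $\liminf_m \mathcal{D}_m^*(x) \ge \tfrac{1}{K_b}\|x\|$ up to $\epsilon$.

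For (i)$\Rightarrow$(ii) I argue the contrapositive: assuming $h_l^*$ bounded by $L$ and $h_r^* \to \infty$, I exhibit vectors $x$ with $\liminf_m \mathcal{D}_m^*(x)/\|x\|$ arbitrarily small. The idea is to take an ``expensive'' vector $x = \mathbf{1}_{\theta B}$ with $\|\mathbf{1}_{\theta B}\| \ge h_r^*(|B|) - 1$ large, supported in an initial segment $[1,M]$, and approximate it by $\mathbf{1}_{\varepsilon'(B \cup D)}$ where $\varepsilon'$ extends $\theta$ on $B$ and equals $\varepsilon$ on a ``cheap'' set $D \subset (M,\infty)$ disjoint from $B$, so that $x - \mathbf{1}_{\varepsilon'(B \cup D)} = -\mathbf{1}_{\varepsilon D}$ and hence $\mathcal{D}_{|B|+|D|}^*(x) \le \|\mathbf{1}_{\varepsilon D}\|$. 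The crux, and the step I expect to be the main obstacle, is producing cheap sets of arbitrarily large size beyond $M$: here I would use a gliding-hump argument, taking a cheap set $G$ of large size $d$ with $\|\mathbf{1}_{\varepsilon G}\| \le L + 1$ and cutting off its head via $\mathbf{1}_{\varepsilon(G \setminus [1,M])} = \mathbf{1}_{\varepsilon G} - P_M \mathbf{1}_{\varepsilon G}$, which has norm $\le (1 + K_b)(L+1)$ and size $\ge d - M \to \infty$. One cannot simply intersect an arbitrary cheap set with $(M,\infty)$ and then trim to a prescribed size, since without unconditionality restricting a signed sum to a subset may blow up its norm; it is the projection $\mathbf{1}_{\varepsilon G} - P_M\mathbf{1}_{\varepsilon G}$ that keeps the norm controlled. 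Setting $D := G \setminus [1,M]$ yields $\liminf_m \mathcal{D}_m^*(x) \le (1+K_b)(L+1)$ while $\|x\|$ may be made as large as we wish by choosing $B$ with $h_r^*(|B|)$ large, so the ratio tends to $0$ and (i) fails.

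Finally, for the ``moreover'' statement I revisit the divergent-case argument under $K_b = 1$. Monotonicity removes the factor $1/K_b$ from the projection bound, giving $\|x - \alpha\mathbf{1}_{\varepsilon A}\| \ge \|P_N(x)\| - |\alpha| h_r^*(N)$ in the small-$|\alpha|$ regime and $\ge |\alpha| h_l^*(m) - \|x\| \ge \|x\|$ in the large-$|\alpha|$ regime when splitting at $|\alpha| = 2\|x\|/h_l^*(m)$. Given $\epsilon > 0$, fixing $N$ with $\|P_N(x)\| \ge \|x\| - \epsilon$ and using $h_l^*(m) \to \infty$ to drive both the crossover threshold and the error term $|\alpha| h_r^*(N)$ to $0$, I get $\mathcal{D}_m^*(x) \ge \|x\| - 2\epsilon$ for all large $m$; together with $\mathcal{D}_m^*(x) \le \|x\|$ this gives $\lim_m \mathcal{D}_m^*(x) = \|x\|$. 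The statements for $\mathcal{D}_m$, $h_l$, $h_r$ follow verbatim, replacing the signed sums $\mathbf{1}_{\varepsilon A}$ by $\mathbf{1}_A$ throughout.
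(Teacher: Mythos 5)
Your proposal is correct and follows essentially the same route as the paper: your two-regime optimization over $|\alpha|$ (an increasing lower bound for large $|\alpha|$ played against a decreasing one for small $|\alpha|$) is precisely the mechanism of Theorem \ref{thm:sameBehaviourImpliesEquivalence}, which the paper runs uniformly under the single hypothesis $\sup_n h_r^*(n) \le C \sup_n h_l^*(n)$ rather than by your two cases, and your head-trimming construction $\mathbf{1}_{\varepsilon (G \setminus [1,M])} = \mathbf{1}_{\varepsilon G} - P_M \mathbf{1}_{\varepsilon G}$ in the contrapositive of (i)$\Rightarrow$(ii) is exactly the gliding-hump argument of Proposition \ref{prop:main}. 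The only real difference is bookkeeping: the paper's unified constant $\tfrac{1}{C + K_b(1+C)}$ delivers the monotone ``moreover'' statement simply by letting $C \to 0^{+}$ with $K_b = 1$, whereas you rerun the divergent case separately.
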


\noindent Before going into the proof let us make a few observations:

\begin{itemize}\itemsep1em
\item[$\rhd$] From Theorem \ref{thm:main} we recover  \eqref{equa:precedent1} and \eqref{equa:precedent2}. Indeed, if $\mathbb{H}$ is a (separable) Hilbert space and $\mathcal{B}$ is an orthonormal basis of $\mathbb{H}$ then $h_{l}(m) = h_{l}^*(m) = m^{1/2}$. On the other hand, for $\mathbb{X} = \ell_{p}$ with $1 \leq p < \infty$ and $\mathcal{B}$ is the canonical basis, it holds that $h_{l}(m) = h_{l}^*(m) = m^{1/p}$.
\item[$\rhd$] For $\mathbb{X} = L_{p}[0,1]$ we have that the  Haar basis $\mathcal{B}$ is monotone (see \cite[Theorem 5.18]{H}) and satisfies $h_l^*(m)=h_l(m)\approx m^{1-1/p}$ for $1\leq p<\infty$. Hence, it satisfies that $\lim_{m}{\mathcal{D}^*_{m}(x)} = \lim_{m}{\mathcal{D}_{m}(x)} = \| x\|$ for every $x \in \mathbb{X}$.

\item[$\rhd$] If $\mathcal{B}$ is superdemocratic (resp. democratic), then it satisfies Theorem \ref{thm:main}.(ii)  (resp. Theorem \ref{thm:main}.(ii) for $h_{r}(m)$ and $h_{l}(m)$). However, there are easy examples showing that converse is not true. For instance, the canonical basis of  $\ell^2\oplus_{1} \ell^4$ satisfies that $h_l(m)=h_l^*(m)\approx m^{1/4}$ and $h_r(m)=h_r^*(m)\approx m^{1/2}$.

\item[$\rhd$] \emph{Example of basis not satisfying Theorem \ref{thm:main}.(ii)}: Let us consider $\mathbb{X} = \ell_{1}$ and let $\mathcal{B} = (\mathbf{x}_{n})_{n=1}^{\infty}$ be the \emph{difference basis}, which in terms of the canonical basis $(e_{n})_{n = 1}^{\infty}$ is given by
	\[ \mathbf{x}_1 = e_1\,, \quad  \mathbf{x}_n = e_n - e_{n-1}\,, \quad n=2,3,... \]
	By \cite[Lemma 8.1]{BBGHO}, it holds that $h_l^*(m)=h_l(m)=1$ and $h_r^*(m)=h_r(m)=2m$.
	
\item[$\rhd$] \emph{Example of basis satisfying $\lim_{m}{\mathcal{D}_{m}(x)} = \| x\|$ for every $x \in \mathbb{X}$, but $\liminf_{m}{\mathcal{D}^{\ast}(x)}$ is not even equivalent to $\| x\|$}: Let $\mathbb{X} = \mathbf{c}$ be the space of convergent sequences and let $\mathcal{B} = ( \mathbf{s}_{n})_{n=1}^{\infty}$ be the \emph{summing basis}, defined as
\[ \mathbf{s}_{n}:=(\underbrace{0, \ldots, 0}_{n-1}, 1,1, \ldots)\,, \quad n \in \N \,.\]
By \cite[Lemma 8.1]{BBGHO} we know that $h_l^*(m) \approx 1$ and $h_r^* (m)\approx m$, so Theorem \ref{thm:main}.(ii) does not hold. On the other hand, $\mathcal{B}$ is monotone and $h_l(m)\approx h_r(m)\approx m$ by the same reference. Thus, $\lim_{m}{\mathcal{D}_{m}(x)} = \| x\|$ for every $x \in \mathbb{X}$.
	\item[$\rhd$] \emph{Condition Theorem \ref{thm:main}.(ii) is not preserved for dual bases}: If $(e_{n})_{n=1}^{\infty}$ is the canonical basis of $\ell_{1}$, let us consider the sequence  $\mathbf{x}_n = e_n - (e_{2n+1}+e_{2n+2})/2$, $n\in \N$ and the space
	\[ \mathbb X:=\overline{\operatorname{span}\lbrace \mathbf{x}_n : n\in\mathbb N\rbrace}^{\ell^1}\,. \]
This is known as the \emph{Lindenstrauss space} \cite{L} and the sequence $\mathcal{B} = (\mathbf{x}_{n})_{n=1}^{\infty}$ is actually a monotone basis for $\mathbb{X}$ (see \cite[pg 457]{S}).  In \cite[Section 8.2]{BBGHO} it is shown that $h_l^* (m)\approx m$. On the other hand, in the same reference it is proved that the dual space $\mathbb{X}^{\ast}$ with the corresponding dual basis $\mathcal{B}^{\ast}$ satisfies  $h_l^* (m)\approx 1$ and $h_r^*(m)\approx \ln(m)$.
\end{itemize}

\subsection{Proof of the main result}

\begin{prop}\label{prop:main}
Let $\mathcal{B}$ be a basis of a Banach space $\mathbb{X}$. Then,
	\begin{eqnarray}
\label{equa:p1aux1}	 \sup_{\substack{A \subset \N\\ finite},\vert\eta\vert=1}{ \, \liminf_{m \rightarrow + \infty} \mathcal D_{m}^*(\mathbf{1}_{\eta A})} \, & \leq & \,  (1 + K_b) \,  \liminf_{m \rightarrow + \infty}{h_{l}^*(m)}\,\,\, \leq \,\,\, \infty\,,\\
\label{equa:p1aux2}	 \sup_{\substack{A \subset \N\\ finite}}{ \, \liminf_{m \rightarrow + \infty} \mathcal D_{m}(\mathbf{1}_{A})} \, & \leq & \,  (1 + K_b) \,  \liminf_{m \rightarrow + \infty}{h_{l}(m)}\,\,\, \leq \,\,\, \infty\,.  
	\end{eqnarray}
\end{prop}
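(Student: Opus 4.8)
The plan is to prove the first inequality \eqref{equa:p1aux1}; the second one, \eqref{equa:p1aux2}, follows by the very same argument after suppressing all the signs. Write $L := \liminf_{m \to +\infty} h_{l}^*(m)$. If $L = +\infty$ there is nothing to prove, so assume $L < +\infty$ and fix a finite set $A \subset \N$ together with a choice of signs $|\eta| = 1$. The goal is to bound $\liminf_{m} \mathcal{D}_{m}^*(\mathbf{1}_{\eta A})$ by $(1+K_b)L$ with a constant that does \emph{not} depend on $A$ or $\eta$; taking the supremum over all such $A$ and $\eta$ then delivers \eqref{equa:p1aux1}.

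The first point I would exploit is that $A$ is finite: choose $M \in \N$ with $A \subseteq \{1,\ldots,M\}$. Pick a subsequence $(m_j)$ along which $h_{l}^*(m_j) \to L$, and for each $j$ select, from the definition of the infimum defining $h_l^*$, a set $C_j$ with $|C_j| = m_j$ and signs $\delta_j$ satisfying $\|\mathbf{1}_{\delta_j C_j}\| \leq h_l^*(m_j) + 1/j$. The core step is to replace $C_j$ by a portion disjoint from $A$ while losing only the factor $1+K_b$: setting $D_j := C_j \cap \{M+1, M+2, \ldots\}$, the identity $\mathbf{1}_{\delta_j D_j} = (\Id - P_M)\mathbf{1}_{\delta_j C_j}$ together with $\|\Id - P_M\| \leq 1 + K_b$ gives
\[ \|\mathbf{1}_{\delta_j D_j}\| \;\leq\; (1+K_b)\,\|\mathbf{1}_{\delta_j C_j}\| \;\leq\; (1+K_b)\Big(h_l^*(m_j) + \tfrac{1}{j}\Big). \]
By construction $D_j \cap A = \emptyset$ and $|D_j| \geq m_j - M \to +\infty$.

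With these disjoint sets in hand I would approximate $\mathbf{1}_{\eta A}$ by a polynomial supported on $B_j := A \cup D_j$, taking coefficient $\alpha = 1$ and the signs $\varepsilon$ equal to $\eta$ on $A$ and to $\delta_j$ on $D_j$. Since $A$ and $D_j$ are disjoint, the supports cancel exactly on $A$ and one is left with $\mathbf{1}_{\eta A} - \mathbf{1}_{\varepsilon B_j} = -\mathbf{1}_{\delta_j D_j}$, whence
\[ \mathcal{D}^*_{|B_j|}(\mathbf{1}_{\eta A}) \;\leq\; \|\mathbf{1}_{\delta_j D_j}\| \;\leq\; (1+K_b)\Big(h_l^*(m_j) + \tfrac{1}{j}\Big). \]
As $|B_j| = |A| + |D_j| \to +\infty$, passing to the $\liminf$ over $j$ yields $\liminf_{m} \mathcal{D}^*_{m}(\mathbf{1}_{\eta A}) \leq (1+K_b)L$, uniformly in $A$ and $\eta$, which is exactly what is needed.

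The step I expect to be the crux is the passage from the near-optimal sets $C_j$ for $h_l^*$ to sets disjoint from the fixed finite set $A$: the operator $\Id - P_M$ performs this surgery at the price of the factor $1 + K_b$, and it is precisely the finiteness of $A$ that lets $M$ be chosen once and for all, so the discarded initial block never exceeds $M$ coordinates. Because the whole argument is phrased through $\liminf$, the mismatch between $|D_j|$ and $m_j$ (at most $M$ coordinates) is harmless, and no exact control of cardinalities is required.
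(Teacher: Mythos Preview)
Your proof is correct and follows essentially the same route as the paper: both fix $M$ with $A\subset\{1,\dots,M\}$, apply $\Id-P_M$ to strip a candidate set $C$ down to a tail $C'$ disjoint from $A$ at the cost of the factor $1+K_b$, and then observe that $\mathbf{1}_{\eta A}-\mathbf{1}_{\varepsilon(A\cup C')}=-\mathbf{1}_{\varepsilon C'}$. The only cosmetic difference is that the paper starts from an arbitrary $\lambda<\liminf_m\mathcal D_m^*(\mathbf 1_{\eta A})$ and shows $\lambda\le(1+K_b)\|\mathbf 1_{\varepsilon C}\|$ for every large $C$, whereas you pick a subsequence of near-minimizers for $h_l^*$ and pass to the limit directly.
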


\begin{proof}
We explain the argument for \eqref{equa:p1aux1}, as the proof of \eqref{equa:p1aux2} is completely analogous with the obvious replacements. Let us fix a finite set $A \subset \mathbb{N}$ and $\eta \in \{ \pm 1\}^{A}$, and let us take $\lambda \in \mathbb{R}$ satisfying
	\begin{equation}\label{equa:p1aux3} 
	\lambda < \liminf_{m \rightarrow  + \infty}{\mathcal D_{m}^*(\mathbf{1}_{\eta A})}. 
	\end{equation}
We can then find  $m_{0}, n_{0} \in \N$ with the following properties:
	\begin{itemize}\itemsep0.3em 
		\item[$\rhd$] $ \lambda \leq \| \mathbf{1}_{\eta A} - \alpha \mathbf{1}_{\varepsilon B}\| $  for every $\alpha \in \mathbb{R}$, $\vert\varepsilon\vert=1$ and $B \subset \N$ with $|B| \geq m_{0}$\,,
		\item[$\rhd$] $A \subset \{ 1, \ldots, n_{0}\}$\,.
	\end{itemize}
	Let $C \subset \N$ be a finite set with $|C| \geq m_{0} + n_{0}$. Then,
	\[ \mathbf{1}_{\varepsilon C} - P_{n_{0}} (\mathbf{1}_{\varepsilon C}) =  \mathbf{1}_{\varepsilon C'}\,  \]
	where  $C' := C \setminus \{ 1, \ldots, n_{0}\}$. Notice that $|C'| \geq m_{0}$, so in particular 
	\[ \lambda \leq \| \mathbf{1}_{\eta A} - \mathbf{1}_{(\eta A) \cup (\varepsilon C')} \| = \| \mathbf{1}_{\varepsilon C'}\| \leq \| \Id - P_{n_{0}}\| \, \| \mathbf{1}_{\varepsilon C}\| \leq (1 + K_{b}) \, \| \mathbf{1}_{\varepsilon C}\|. \]
	Thus, we have the relation
	\[ \lambda \leq (1 + K_b) \, \liminf_{m \rightarrow + \infty}{h_{l}^ \varepsilon(m)}. \]
Taking supremums on $\lambda$ according to \eqref{equa:p1aux3} we conclude that 
	\[ \liminf_{m \rightarrow  + \infty}{\mathcal D_{m}^*(\mathbf{1}_{\eta A})} \,\, \leq \,\, (1 + K_b) \, \liminf_{m \rightarrow + \infty}{h_{l}^ \varepsilon(m)}. \]
\end{proof}

\begin{thm}\label{thm:sameBehaviourImpliesEquivalence}
Let $\mathcal{B}$ be a basis of a Banach space $\mathbb{X}$. Assume that there is a constant $C>0$ satisfying
\[ \sup_{n \in \N} h_{r}^*(n) \leq C \, \sup_{n \in \N} h_{l}^*(n) \, \leq \infty \,. \]
Then, for every $x \in \mathbb{X}$
\begin{equation}\label{equa:sameBehaviourImpliesEquivalenceAux1} 
\frac{1}{C + K_{b}(1 + C)} \, \| x\| \, \leq \, \liminf_{m}{\mathcal{D}^*_{m}(x)} \, \leq \, \limsup_{m}{\mathcal{D}_{m}^*(x)} \, \leq \, \| x\|\,. 
\end{equation}
\end{thm}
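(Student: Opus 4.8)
The upper bound $\limsup_m \mathcal{D}_m^*(x)\le\|x\|$ is immediate, since taking $\alpha=0$ gives $\mathcal{D}_m^*(x)\le\|x\|$ for every $m$, and $\liminf\le\limsup$ is automatic. Thus the whole content is the lower bound, and the plan is to show that for every $x$ and every sufficiently large $m$, every admissible competitor $\alpha\mathbf{1}_{\varepsilon A}$ (with $|A|=m$, $|\varepsilon|=1$, $\alpha\in\R$) satisfies $\|x\|\le[C+K_b(1+C)]\,\|x-\alpha\mathbf{1}_{\varepsilon A}\|$ up to an error vanishing as $m\to\infty$; infimizing over competitors and then letting $m\to\infty$ yields $\liminf_m\mathcal{D}_m^*(x)\ge\frac{1}{C+K_b(1+C)}\|x\|$. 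Throughout write $y=x-\alpha\mathbf{1}_{\varepsilon A}$.

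First I would exploit the convergence $P_Nx\to x$: given $\delta>0$ fix $n_0$ with $\|x-P_Nx\|<\delta$ for all $N\ge n_0$, and restrict to $m>n_0$. Splitting $A=A_1\cup A_2$ with $A_1=A\cap\{1,\dots,n_0\}$ and $A_2=A\setminus\{1,\dots,n_0\}$, we have $|A_2|\ge m-n_0$ and, crucially, $(\Id-P_{n_0})(\mathbf{1}_{\varepsilon A})=\mathbf{1}_{\varepsilon A_2}$. Applying $\Id-P_{n_0}$ to $x-y=\alpha\mathbf{1}_{\varepsilon A}$ gives the key estimate
\[ |\alpha|\,\|\mathbf{1}_{\varepsilon A_2}\|=\|(\Id-P_{n_0})(x-y)\|\le\|x-P_{n_0}x\|+(1+K_b)\|y\|\le\delta+(1+K_b)\|y\|, \]
which controls $|\alpha|$ through a single block living ``far out'' in the basis.

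The heart of the argument is to convert this into control of the full block $|\alpha|\,\|\mathbf{1}_{\varepsilon A}\|$, and here both hypotheses enter. On one side $\|\mathbf{1}_{\varepsilon A}\|\le h_r^*(m)\le\sup_n h_r^*(n)\le C\sup_n h_l^*(n)$; on the other side $\|\mathbf{1}_{\varepsilon A_2}\|\ge h_l^*(|A_2|)$, and the quasi-monotonicity $h_l^*(k)\le K_b h_l^*(m)$ ($k\le m$) recorded before the statement yields $\liminf_m h_l^*(m)\ge\frac{1}{K_b}\sup_n h_l^*(n)$, so that $\sup_n h_l^*(n)$ and $h_l^*(|A_2|)$ become comparable (up to $K_b$) once $m$ is large, since $|A_2|\ge m-n_0\to\infty$. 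Combining these with the key estimate bounds $|\alpha|\,\|\mathbf{1}_{\varepsilon A}\|$ by a constant multiple of $\|y\|$ (plus a $\delta$-term), and feeding this either into $\|x\|\le\|y\|+|\alpha|\,\|\mathbf{1}_{\varepsilon A}\|$, or, to sharpen the constant, into $\|x\|\le\|x-P_{n_0}x\|+\|P_{n_0}y\|+|\alpha|\,\|\mathbf{1}_{\varepsilon A_1}\|\le\delta+K_b\|y\|+|\alpha|\,\|\mathbf{1}_{\varepsilon A_1}\|$, then letting $m\to\infty$ and $\delta\to0$, produces the asserted bound.

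I expect the delicate point to be precisely this comparison step and the attendant book-keeping of constants, since it forces one to treat the two admissible regimes of condition (ii) separately. When $\sup_n h_l^*(n)<\infty$ the democracy correction $|\alpha|\,\|\mathbf{1}_{\varepsilon A_1}\|$ does \emph{not} vanish and must be estimated through the combination of the superdemocracy-of-suprema hypothesis with quasi-monotonicity, which is exactly what injects both the factor $C$ and the factor $K_b$ into the final constant. When $\sup_n h_l^*(n)=\infty$, quasi-monotonicity forces $h_l^*(m)\to\infty$, so the key estimate drives $|\alpha|$ to $0$; since the correction block $A_1$ lives in the fixed finite set $\{1,\dots,n_0\}$, the term $|\alpha|\,\|\mathbf{1}_{\varepsilon A_1}\|$ then vanishes and the argument collapses to $\|x\|\le\delta+K_b\|y\|+o(1)$, giving the cleaner $\liminf_m\mathcal{D}_m^*(x)\ge\frac{1}{K_b}\|x\|$, comfortably stronger than required. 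The statement for $\mathcal{D}_m$, $h_l$, $h_r$ follows by the same scheme, simply dropping the signs $\varepsilon$.
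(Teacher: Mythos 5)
Your scheme---fix $n_0$ with $\|x-P_Nx\|<\delta$ for $N\ge n_0$, use a block of the competitor living beyond $n_0$ to control $|\alpha|$, then recover $\|x\|$ by a triangle or head estimate---is structurally the same as the paper's proof, and it does yield a two-sided equivalence between $\liminf_m\mathcal{D}_m^*(x)$ and $\|x\|$; your treatment of the case $\sup_n h_l^*(n)=\infty$ (giving the stronger bound $\|x\|/K_b$) is also fine. The genuine gap is quantitative, and it matters because the statement asserts a specific constant. In the case $S:=\sup_n h_l^*(n)<\infty$ you bound the far block by $\|\mathbf{1}_{\varepsilon A_2}\|\ge h_l^*(|A_2|)$ for the \emph{full} tail $A_2=A\setminus\{1,\dots,n_0\}$, whose cardinality you do not control, and then invoke quasi-monotonicity to get $h_l^*(|A_2|)\ge \frac{1}{K_b}S$ asymptotically. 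That step injects an extra factor $K_b$ which the stated constant does not contain. Carrying out your bookkeeping (with $y=x-\alpha\mathbf{1}_{\varepsilon A}$): from $|\alpha|\,h_l^*(|A_2|)\le\delta+(1+K_b)\|y\|$ and $\|\mathbf{1}_{\varepsilon A}\|\le h_r^*(m)\le CS\le CK_b\,h_l^*(|A_2|)\,(1+o(1))$, the plain triangle inequality gives in the limit $\|x\|\le\bigl(1+CK_b(1+K_b)\bigr)\|y\|$, while your ``sharpened'' route gives $\|x\|\le K_b\bigl(1+C(1+K_b)\bigr)\|y\|$ --- which is actually the \emph{worse} of the two, since replacing $\|y\|$ by $\|P_{n_0}y\|\le K_b\|y\|$ costs a factor $K_b$ on the leading term and buys nothing ($\mathbf{1}_{\varepsilon A_1}$ is still estimated through $\sup_n h_r^*(n)\le CS$). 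Since $C\ge1$ is forced when $S<\infty$ (because $h_l^*\le h_r^*$ pointwise), both of these constants strictly exceed $C+K_b(1+C)$ whenever $K_b>1$; so as written your argument establishes the theorem with its stated constant only when $K_b=1$.

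The paper avoids this loss by never appealing to quasi-monotonicity: it fixes \emph{in advance} an $m_0$ with $h_r^*(n_0)\le C(1+o_\delta(1))\,h_l^*(m_0)$, which the hypothesis permits by taking $h_l^*(m_0)$ within a factor $1-\delta$ of $S$, and then, for each competitor with $|A|\ge m_0+n_0$, trims the tail to \emph{exactly} $m_0$ elements: choose $n\ge n_0$ with $|A\cap(n,\infty)|=m_0$ and apply $\Id-P_n$, obtaining $|\alpha|\,h_l^*(m_0)\le\delta\|x\|+(1+K_b)\|y\|$ with the pre-selected, nearly extremal $h_l^*(m_0)$ in place of $h_l^*(|A_2|)$. (The paper then pairs this increasing-in-$|\alpha|$ bound with a decreasing one coming from the head projection at $|A\cap[1,n]|=n_0$ and evaluates both at their intersection point; substituting one bound into the other, as you do, produces the same constant here, so that difference is cosmetic---the essential fix is the trimmed tail.) If you graft this trimming into your argument, your triangle inequality $\|x\|\le\|y\|+|\alpha|\,\|\mathbf{1}_{\varepsilon A}\|$ yields $\|x\|\le(1+C+CK_b)\|y\|$, which is at least as strong as the stated bound since $K_b\ge1$.
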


\begin{proof}
Let us fix $x \in \mathbb{X}$. We just have to show that the left hand-side of \eqref{equa:sameBehaviourImpliesEquivalenceAux1}  holds. For, let $0<\delta <1$ and $m_{0}, n_{0} \in \mathbb{N}$ such that
\begin{eqnarray*}
\| P_{n}(x) - x\| &\leq& \delta \, \| x\| \quad \mbox{ for every } n \geq n_{0 }\,, \\[1mm]
h_{r}^*(n_{0}) &\leq&  \, C \, (1-\delta) \, h_{l}^*(m_{0}) \,.
\end{eqnarray*} 
Given $\alpha \in \R$, $A \subset \N$ with $|A| \geq m_{0} + n_{0}$ and $\varepsilon \in \{ \pm 1\}^{A}$, we are going to establish two lower bounds for $\| x - \alpha \mathbf{1}_{\varepsilon A}\|$. 
\begin{itemize}
\item[$\rhd$] Since $|A \cap (n_{0}, +\infty)| \geq m_{0}$ we can find $n \geq n_{0}$ such that $|A \cap (n, + \infty)| = m_{0}$. Thus, applying the operator $\Id - P_{n}$  to $x - \alpha \mathbf{1}_{\varepsilon A}$ we have that
\begin{equation}\label{equa:auxMainTheoEstimation1}
\| x- \alpha \mathbf{1}_{\varepsilon A} \| \geq \frac{1}{K_{b} + 1} \| (\Id - P_{n})(x) - \alpha \mathbf{1}_{\varepsilon (A \cap (n,+\infty))} \| \geq \frac{1}{K_{b} + 1} \big( |\alpha| \, h_{l}^*(m_{0}) - \delta \, \| x\| \big)\,.
\end{equation}
\item[$\rhd$] As $|A| \geq n_{0}$ we can find $n \geq n_{0}$ with $|A \cap [1, n]| = n_{0}$, so that
\begin{eqnarray}
\| x - \alpha \mathbf{1}_{\varepsilon A}\| \geq \frac{1}{K_{b}} \, \big( \| P_{n}(x) - \alpha \mathbf{1}_{\varepsilon (A \cap [1,n])} \| \big) &\geq& \frac{1}{K_{b}} \, \big( \| x\|(1 - \delta) - |\alpha| \, h_{r}^*(n_{0})  \big)\\
\label{equa:auxMainTheoEstimation2} & \geq& \frac{1- \delta}{K_{b}} \, \big( \| x\| - C \, |\alpha| \, h_{l}^*(m_{0}) \big)
\end{eqnarray}
\end{itemize}
Note that the lower estimations  \eqref{equa:auxMainTheoEstimation1} and \eqref{equa:auxMainTheoEstimation2} are respectively increasing and decreasing linear functions $f(t)$ and $g(t)$ on $t=|\alpha|$. Moreover these functions have a unique point of intersection $t_{0} > 0$ which can be easily checked to satisfy
\begin{equation} 
t_{0} = \frac{\| x\|}{h_{l}^*(m_{0})} \cdot \frac{ (1- \delta) \, (1+K_{b}) + \delta \, K_{b}}{C(1- \delta) (1 + K_{b}) + K_{b} }\,.
\end{equation}
Thus
\[ \| x - \alpha \mathbf{1}_{\varepsilon A}\| \geq \max{\{f(|\alpha|), g(|\alpha|)\}} \geq f(t_{0}) = g(t_{0}) = \frac{\| x\|}{1 + K_{b}} \left[ \frac{ (1- \delta) \, (1+K_{b}) + \delta \, K_{b}}{C(1- \delta) (1 + K_{b}) + K_{b} } - \delta  \right] \,. \]
Taking the infimum of $\| x - \alpha \mathbf{1}_{\varepsilon A}\|$ on $\alpha \in \mathbb{R}$ and $A$ satisfying the conditions above, we deduce that
\[ \liminf_{k \rightarrow + \infty}{\mathcal{D}_{k}^*(x)} \geq  \inf_{k \geq m_{0} + n_{0}}{\mathcal{D}_{k}^*(x)} \geq \frac{\| x\|}{1 + K_{b}} \left[ \frac{ (1- \delta) \, (1+K_{b}) + \delta \, K_{b}}{C(1- \delta) (1 + K_{b}) + K_{b} } - \delta  \right]\,. \]
Finally, making $\delta \rightarrow 0^{+}$ we get the desired conclusion.
\end{proof}

\begin{proof}[Proof of Theorem \ref{thm:main}]
To check (i) $\Rightarrow$ (ii),  note that using Proposition \ref{prop:main} we  then deduce that
\[ \sup_{m \in \N}{h_{r}^*(m)} =  \sup_{\substack{A \subset \N\\ finite},\vert\eta\vert=1}{ \| \mathbf{1}_{\eta A}\|} \, \leq  \,\sup_{\substack{A \subset \N\\ finite},\vert\eta\vert=1}{ \, \liminf_{m \rightarrow + \infty} \mathcal{D}_{m}^*(\mathbf{1}_{\eta A})} \, \leq  \,  (1 + K_b) \,  \liminf_{m \rightarrow + \infty}{h_{l}^*(m)} \, \leq \,  \infty. \]
It is clear from this inequality that $h_{l}^{\ast}(m)$ and $h_{r}^{\ast}(m)$ are then comparable. To see the converse  (ii) $\Rightarrow$ (i), note first that if $h_{l}^{\ast}(m)$ and $h_{r}^{\ast}(m)$ are comparable, then there exists $C>0$ such that \begin{equation}\label{equa:thmMainAux}
\sup_{m \in \N}{ h_{r}^{\ast}(m)} \leq \sup_{m \in \N}{ C \, h_{l}^{\ast}(m)}
\end{equation}
and so Theorem \ref{thm:sameBehaviourImpliesEquivalence} applies. The second statement of the theorem follows also from Theorem \ref{thm:sameBehaviourImpliesEquivalence} since $\mathcal{B}$ being monotone means that $K_{b} = 1$, and condition $\lim_{m} h_{l}^{\ast}(m) = + \infty$ means that \eqref{equa:thmMainAux} holds for every $C>0$.
\end{proof}

\section{Almost-greediness and polynomials with constant coefficients}\label{section4}

\begin{defn}
Let $\mathcal{B} = (e_{n})_{n=1}^{\infty}$ be a basis of a Banach space $\mathbb{X}$. We say that $\mathcal{B}$ is \emph{almost-greedy} if there exists a constant $C>0$ such that 
\[ \Vert x-\mathcal G_m(x)\Vert \leq C \, \widetilde{\sigma}_m(x) \]
where 
\[ \widetilde{\sigma}_m(x,\mathcal B)_{\mathbb X}=\widetilde{\sigma}_m(x):=\inf\lbrace \Vert x-\sum_{n \in A}{e_{n}^{\ast}(x) \, e_{n}}\Vert : A\subset\mathbb N, \vert A\vert=m\rbrace.\]
\end{defn}
 This notion was introduced  by S. J. Dilworth, N. J. Kalton, D. Kutzarova and V. N. Temlyakov in \cite{DKKT}, together with two characterizations. First, that a basis is almost-greedy if and only if it is quasi-greedy and democratic. The second characterization  is given in the next theorem.
 
\begin{thm}[{\cite[Theorem 3.3]{DKKT}}]\label{dkkt2}
Let $\mathcal{B}$ be a basis of a Banach space $\mathbb{X}$. Then, $\mathcal{B}$ is almost-greedy if and only if for some (resp. every) $\lambda>1$, there exists a positive constant $C_\lambda$ such that 
$$\Vert x-\mathcal G_{[\lambda m]}(x)\Vert \leq C_\lambda \sigma_m(x)\,, \quad  \mbox{ for every } x\in\mathbb X,\, m\in\mathbb N.$$
Indeed, $C_\lambda \approx \frac{1}{\lambda-1}$.
\end{thm}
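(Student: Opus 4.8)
The plan is to reduce everything to the first characterization quoted just above the statement, namely that a basis is almost-greedy if and only if it is quasi-greedy and democratic, and then to prove the two nontrivial implications. Since ``every $\lambda>1$ works'' $\Rightarrow$ ``some $\lambda>1$ works'' is trivial, it suffices to show (a) that the doubling inequality $\|x-\mathcal{G}_{[\lambda m]}(x)\|\le C_\lambda\sigma_m(x)$ for a single $\lambda>1$ forces $\mathcal{B}$ to be quasi-greedy and democratic, and (b) that almost-greediness implies the doubling inequality for every $\lambda>1$ with $C_\lambda\approx 1/(\lambda-1)$.

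For direction (a), I would first extract quasi-greediness: taking $\sigma_m(x)\le\|x\|$ gives a uniform bound $\|x-\mathcal{G}_{[\lambda m]}(x)\|\le C_\lambda\|x\|$ along the subsequence of indices of the form $[\lambda m]$, and the remaining indices are filled in by bridging the bounded-ratio gaps between consecutive values $[\lambda m]$ using the basis constant $K_{b}$. Democracy would follow from the usual test-vector computation: given disjoint sets $A,A'$ with $|A|=|A'|=m$, apply the hypothesis to a vector of the form $\mathbf{1}_{\varepsilon A}+(1+\delta)\mathbf{1}_{\eta A'}$, for which the optimal $m$-term approximation removes the block with larger coefficients, so that $\sigma_m\lesssim\|\mathbf{1}_{\eta A'}\|$ while the surviving greedy error dominates $\|\mathbf{1}_{\varepsilon A}\|$; letting $\delta\to 0$ yields $h_r\lesssim h_l$. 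These are routine perturbation arguments.

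The core is direction (b). Fix $x$ and a near-optimal approximant $z=\sum_{n\in B}a_n e_n$ with $|B|=m$ and $\|x-z\|\approx\sigma_m(x)$, and set $y=x-z$, $k=[\lambda m]$, $\Lambda=\Lambda_k(x)$, and $t=\min_{n\in\Lambda}|e_n^{*}(x)|$. The decisive observation is that at least $k-m\approx(\lambda-1)m$ of the greedy coordinates lie \emph{outside} $B$; there $e_n^{*}(x)=e_n^{*}(y)$ and $|e_n^{*}(x)|\ge t$, so $y$ has at least $(\lambda-1)m$ coordinates of modulus $\ge t$. Invoking the standard quasi-greedy lower democracy estimate (see \cite{AK}), this forces $\sigma_m(x)=\|y\|\gtrsim t\,h_l((\lambda-1)m)$. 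On the other hand the remainder $x-\mathcal{G}_k(x)$ splits into a piece supported on $B\setminus\Lambda$, whose coordinates are $\le t$ over at most $m$ indices and hence has norm $\lesssim t\,h_r(m)$, and a genuinely greedy tail coming from the coordinates of $y$ off $B$. Bounding the first piece through the subadditivity estimate $h_r(m)\lesssim\frac{1}{\lambda-1}\,h_r((\lambda-1)m)$ together with democracy $h_r\approx h_l$ produces exactly the factor $1/(\lambda-1)$ and ties it to the lower bound for $\sigma_m(x)$; combining with almost-greediness to control the greedy tail then yields the claimed inequality.

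The hard part will be the conditionality of almost-greedy bases: such bases need not be unconditional, so one cannot simply bound the projection onto the arbitrary support $B$ by $\|y\|$, and this is precisely why the doubling factor $[\lambda m]$ rather than $m$ is unavoidable. The extra $(\lambda-1)m$ coordinates are what allow one to replace the uncontrolled arbitrary-support projection by a genuine greedy remainder of $x$ plus a low-dimensional (of size $\le m$) error governed by the democracy functions. The delicate bookkeeping is therefore the comparison of $h_l$ and $h_r$ at the two scales $(\lambda-1)m$ and $[\lambda m]$, which is where both the equivalence and the sharp dependence $C_\lambda\approx 1/(\lambda-1)$ ultimately come from.
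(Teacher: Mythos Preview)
The paper does not prove Theorem~\ref{dkkt2}: it is quoted verbatim from \cite[Theorem 3.3]{DKKT} and used as a black box in the proof of the subsequent result. There is therefore no ``paper's own proof'' to compare your proposal against.

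That said, your sketch is broadly in line with the original argument in \cite{DKKT}. One small point of care in your direction~(a): in the democracy test you wrote $|A|=|A'|=m$, but the hypothesis controls $\mathcal{G}_{[\lambda m]}$, not $\mathcal{G}_m$, so the greedy set you must force has size $[\lambda m]$. The correct setup (mirrored in the proof of the next theorem in this paper) takes $|A|=m$ and builds a larger block of size $[\lambda m]$ carrying the heavier coefficients; this yields $h_r(m)\lesssim h_l([\lambda m])$, and one then invokes $h_l([\lambda m])\le K_b\,h_l(m)$ to close the loop. Your direction~(b) captures the essential mechanism from \cite{DKKT}: the surplus $[\lambda m]-m\ge(\lambda-1)m$ greedy coordinates outside the best-approximation support $B$ are what let you trade an uncontrolled projection for a genuine greedy tail plus a democracy-bounded remainder, and the comparison of $h_l,h_r$ at scales $m$ and $(\lambda-1)m$ is exactly where the $1/(\lambda-1)$ dependence enters.
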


As in the case of greedy basis, we can replace the error $\sigma_{m}(x)$ by the $m$-th error of approximation by polynomials with constant (resp. modulus-constant) coefficients.

\begin{thm}
Let $\mathcal{B}$ be a basis of a Banach space $\mathbb{X}$ and let $\lambda > 1$. The following assertions are equivalent:
\begin{enumerate}\itemsep0.5em
\item[(i)] $\mathcal{B}$ is almost-greedy.
\item[(ii)] There is $C>0$ such that $\Vert x-\mathcal G_{[\lambda m]}(x)\Vert \leq C_\lambda \, \mathcal{D}_{m}(x)$  for every $x\in\mathbb X$ and every $m\in\mathbb N$.
\item[(iii)] There is $C>0$ such that $\Vert x-\mathcal G_{[\lambda m]}(x)\Vert \leq C_\lambda \, \mathcal{D}_{m}^*(x)$  for every $x\in\mathbb X$ and every $m\in\mathbb N$.
\end{enumerate}
\end{thm}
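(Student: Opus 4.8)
The plan is to prove the cycle (i) $\Rightarrow$ (iii) $\Rightarrow$ (ii) $\Rightarrow$ (i), exploiting that the three errors are nested: one has $\sigma_m(x) \leq \mathcal{D}_m^*(x) \leq \mathcal{D}_m(x) \leq \|x\|$, where the first two inequalities hold because constant and modulus-constant polynomials are particular competitors in the infimum defining $\sigma_m(x)$, and the last follows by taking $\alpha = 0$. With this at hand the two ``easy'' implications are immediate. For (i) $\Rightarrow$ (iii), Theorem \ref{dkkt2} provides a constant with $\|x - \mathcal{G}_{[\lambda m]}(x)\| \leq C_\lambda \, \sigma_m(x)$, and since $\sigma_m(x) \leq \mathcal{D}_m^*(x)$ this already yields (iii); for (iii) $\Rightarrow$ (ii), it suffices to note that $\mathcal{D}_m^*(x) \leq \mathcal{D}_m(x)$, so bounding by the smaller quantity in (iii) automatically bounds by the larger one in (ii). Thus the whole theorem reduces to establishing (ii) $\Rightarrow$ (i), and for that I would invoke the first characterization of \cite{DKKT} and show that (ii) forces $\mathcal{B}$ to be simultaneously quasi-greedy and democratic.

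Quasi-greediness is the more routine half. Since $\mathcal{D}_m(x) \leq \|x\|$, hypothesis (ii) gives $\|x - \mathcal{G}_{[\lambda m]}(x)\| \leq C_\lambda \|x\|$ for every $m$, and it only remains to fill the gaps between consecutive thresholds. For $[\lambda m] \leq k < [\lambda(m+1)]$ one writes $x - \mathcal{G}_k(x) = (x - \mathcal{G}_{[\lambda m]}(x)) - \sum_{j = [\lambda m]+1}^{k} e_{\rho(j)}^*(x)\, e_{\rho(j)}$, where the correction has at most $[\lambda(m+1)] - [\lambda m] \leq \lambda + 1$ terms, each of norm at most $b^2 \|x\|$ because the basis is semi-normalized and $\sup_n |e_n^*(x)| \leq b\|x\|$. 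This bounds $\|x - \mathcal{G}_k(x)\|$ uniformly by a multiple of $\|x\|$, which is quasi-greediness.

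The heart of the argument, and the step I expect to be the main obstacle, is extracting democracy, namely $h_r(m) \leq C\, h_l(m)$, from (ii). For disjoint finite sets $A, B$ with $|A| = j$ and $|B| = [\lambda j]$ I would test (ii) on $x = \mathbf{1}_A + 2\,\mathbf{1}_B$: the $[\lambda j]$ largest coefficients are exactly those on $B$, so $\mathcal{G}_{[\lambda j]}(x) = 2\,\mathbf{1}_B$ and the left-hand side equals $\|\mathbf{1}_A\|$, whereas choosing $\alpha = 1$ and $C = A$ in the infimum gives $\mathcal{D}_j(x) \leq 2\|\mathbf{1}_B\|$. Hence $\|\mathbf{1}_A\| \leq 2 C_\lambda\,\|\mathbf{1}_B\|$ for every such pair, and after arranging near-optimal disjoint sets (up to a fixed constant, using the quasi-greedy suppression property) this reads $h_r(j) \leq C'\, h_l([\lambda j])$. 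The obstacle is precisely the scale mismatch: the greedy sum deletes $[\lambda j]$ terms while the polynomial accounts for only $j$, so the inequality relates $h_r$ and $h_l$ at \emph{different} scales. I would close this by applying the estimate at the rescaled index $j \approx m/\lceil\lambda\rceil$, so that $[\lambda j] \leq m + \lceil\lambda\rceil + 1$, and then exploiting that for a quasi-greedy basis the democracy functions are doubling and essentially non-decreasing: the suppression bound $\|\mathbf{1}_{A'}\| \leq C\|\mathbf{1}_A\|$ for $A' \subset A$ gives $h_r(m) \leq C\,\lceil\lambda\rceil\, h_r(j)$ together with $h_l([\lambda j]) \leq C\, h_l(m + \lceil\lambda\rceil + 1)$, while adjoining a bounded number of coordinates changes $h_l$ only by an additive $O(b)$, so $h_l(m + \lceil\lambda\rceil + 1) \leq C''\, h_l(m)$ (recall $h_l(m) \geq a > 0$). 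Chaining these estimates produces $h_r(m) \leq C'''\, h_l(m)$, the desired democracy. Finally, quasi-greedy plus democratic is almost-greedy by \cite{DKKT}, completing (ii) $\Rightarrow$ (i) and hence the whole equivalence.
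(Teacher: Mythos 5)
Your proposal is correct and shares the paper's skeleton: (i) $\Rightarrow$ (iii) $\Rightarrow$ (ii) from the nesting $\sigma_m(x) \leq \mathcal{D}_m^*(x) \leq \mathcal{D}_m(x)$ together with Theorem \ref{dkkt2}; quasi-greediness from $\mathcal{D}_m(x) \leq \|x\|$ plus filling the gaps between consecutive thresholds; democracy by testing (ii) on constant-coefficient vectors with an inflated block. The genuine divergence is at the step you rightly call the crux. Both you and the paper first obtain a cross-scale inequality of the form $h_r(m) \leq C\, h_l([\lambda m])$, but the paper then finishes in one line by claiming $h_l([\lambda m]) \leq K_b\, h_l(m)$, citing the ``trivial estimations'' from the beginning of Section \ref{section3}. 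That citation is backwards: those estimates say $h_l(k) \leq K_b\, h_l(m)$ for $k \leq m$, i.e.\ $h_l$ is essentially \emph{non-decreasing}, and the reverse bound the paper needs is false in general --- for the canonical basis of $\ell_p$ one has $h_l([\lambda m]) = [\lambda m]^{1/p} > m^{1/p} = h_l(m)$ while $K_b = 1$. Your resolution of the scale mismatch --- apply the test-vector estimate at the rescaled index $j \approx m/\lceil \lambda \rceil$ so that $[\lambda j] \leq m + O_\lambda(1)$, then chain $h_r(m) \lesssim_\lambda h_r(j) \lesssim h_l([\lambda j]) \leq K_b\, h_l(m + O_\lambda(1)) \lesssim h_l(m)$ using subadditivity/essential monotonicity of $h_r$, essential monotonicity of $h_l$, and the additive perturbation bound for $h_l$ (valid since $h_l$ is bounded below) --- is exactly the argument needed to make (ii) $\Rightarrow$ (i) rigorous. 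So your route is not merely an alternative; it repairs the one flawed step in the paper's own proof.

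Two smaller points. First, your disjointification behind ``$h_r(j) \leq C'\, h_l([\lambda j])$'' is only sketched, and the naive reading (delete $A$ from a near-optimal $B_0$ and pad back up to cardinality $[\lambda j]$) is circular, since padding costs an $h_r$-type term. It does work as you hint, but spell it out: for arbitrary $B_0$ with $|B_0| = [\lambda j]$, split $\|\mathbf{1}_A\| \leq \|\mathbf{1}_{A \cap B_0}\| + \|\mathbf{1}_{A \setminus B_0}\|$; bound the first term by $C_q \|\mathbf{1}_{B_0}\|$ via quasi-greedy suppression (already established at this stage), and for the second run your test vector at the index $j' = |A \setminus B_0|$ against a set $D \subset B_0$ with $|D| = [\lambda j']$, which is automatically disjoint from $A \setminus B_0$, giving $\|\mathbf{1}_{A \setminus B_0}\| \leq 2 C_\lambda \|\mathbf{1}_D\| \leq 2 C_\lambda C_q \|\mathbf{1}_{B_0}\|$. (The paper sidesteps disjointness entirely, with no appeal to quasi-greediness in this step, by testing on $x = \mathbf{1}_A + (1+\delta)\mathbf{1}_{E \setminus A}$ with $E \supset A \cup B$ and $|E| = m + [\lambda m]$; that is the cleaner choice of test vector.) Second, the uniform lower bound you invoke is $h_l(m) \geq 1/b$, coming from $\|e_n^*\| \leq b$, rather than $h_l(m) \geq a$; this is cosmetic.
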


\begin{proof}
Implication (i) $\Rightarrow$ (iii) $\Rightarrow$ (ii) are clear using Theorem \ref{dkkt2} and the inequalities $\sigma_{m}(x) \leq \mathcal D_m^*(x) \leq \mathcal D_m(x)$. To show that (ii) $\Rightarrow$ (i) we follow the ideas from the proof of Theorem \ref{dkkt2}: using the hypothesis, we argue that $\mathcal{B}$ is democratic and quasi-greedy. 

To see that it is democratic, let $m \in \N$ and $A, B \subset \N$ with $|A| = m$ and $|B| = [\lambda m]$. Let us consider a set $E \supset A, B$ with $|E| = m + [\lambda m]$, let $\delta > 0$ and consider the element $x = \mathbf{1}_{A} + (1 + \delta) \mathbf{1}_{E \setminus A}$. Then,
\[ \| \mathbf{1}_{A}\| = \| x - \mathcal{G}_{[\lambda m]}(x)\| \leq C_{\lambda} \, \mathcal{D}_{m}(x) \leq C_{\lambda} \| \mathbf{1}_{B \setminus A} + (1 + \delta) \mathbf{1}_{B \cap A}\|\,. \]
As $\delta > 0$ is arbitrary, taking supremum over $A$ and infimum over $B$ we deduce that
\[ h_{r}(m) \leq C_{\lambda} \, h_{l}(\lambda m) \leq C_{\lambda} \, K_{b} \, h_{l}(m)\,, \]
where in the last inequality we have used the estimations mentioned at the beginning of Section \ref{section2}. 

Let show now that the basis $\mathcal{B}$ is  quasi-greedy. For, take $m\in\mathbb N$ and $r\in\mathbb N\cup\lbrace 0\rbrace$ such that $[\lambda r]\leq m<[\lambda(r+1)]$. Then,  
\[ \Vert x-\mathcal G_m(x)\Vert \leq \Vert x-\mathcal{G}_{[\lambda r]}(x)\Vert + \Vert \mathcal{G}_{[\lambda r]}(x)-\mathcal G_m(x)\Vert\,. \]
Note that $\mathcal{G}_{[\lambda r]}(x)-\mathcal G_m(x)$ contains at most $m-[\lambda r]<\lambda$ summands of the form $e_{n}^{\ast}(x) \, e_{n}$, so that
\[ \Vert \mathcal{G}_{[\lambda r]}(x)-\mathcal G_m(x)\Vert  \leq \big( \lambda \, \sup_{n \in \N}{\| e_{n}\|} \, \sup_{n \in \N}{\| e_{n}^{\ast}\|} \big) \,\| x\|  \,.  \]
On the other hand, using the hypothesis
\[ \| x - \mathcal{G}_{[\lambda r]}(x)\| \leq C_{\lambda} \, \mathcal{D}_{m}(x) \leq C_{\lambda} \, \| x\|\,. \]
Thus, the basis is quasi-greedy.
\end{proof}

Recently, S. J. Dilworth and D. Khurana  provided the following characterization of almost-greedy bases in the same spirit of Theorem \ref{bb}. In order to present it we have to introduce some notation: if $A, B \subset \N$ are finite sets, we will write $A < B$ if $\max{A} < \min{B}$. 
\begin{eqnarray*} 
\mathcal H_m(x) & := & \inf\lbrace \Vert x-\alpha\mathbf{1}_A\Vert \colon \alpha\in\mathbb R\,, \vert A\vert =m\; \text{and either}\; A<\Lambda_{m}(x)\; \text{or}\; A>\Lambda_{m}(x)\rbrace
\end{eqnarray*}
where recall that $\Lambda_{m}(x)$ is the $m$-th greedy set associated to $x$ introduced in Section \ref{section1}.

\begin{thm}{\cite{DK1}}\label{thm:DK1}
Let $\mathcal{B}$ be a basis of a Banach space $\mathbb{X}$. Then, $\mathcal{B}$ is almost-greedy if and only if there exists $C > 0$ such that  
\[ \| x - \mathcal{G}_{m}(x)\| \, \leq \, C \, \inf_{1 \leq n \leq m}{\mathcal{H}_{n}(x)}\quad \mbox{for every $x \in \mathbb{X}$} \mbox{ and  } m \in \N. \]
\end{thm}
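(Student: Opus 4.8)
The plan is to prove both implications, relying throughout on the fact (due to \cite{DKKT}, recalled just after the definition of almost-greedy) that a basis is almost-greedy if and only if it is simultaneously quasi-greedy and democratic; for quasi-greedy bases democracy and superdemocracy coincide up to a constant, so I may freely use $h_{r}^{*}(m)\le C\,h_{l}^{*}(m)$ as well.

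For the implication (inequality) $\Rightarrow$ (almost-greedy) I would verify quasi-greediness and democracy directly. Quasi-greediness is immediate: taking $\alpha=0$ and any admissible set (e.g.\ one lying entirely to the right of $\Lambda_{m}(x)$) gives $\mathcal{H}_{m}(x)\le\|x\|$, whence $\|x-\mathcal{G}_{m}(x)\|\le C\inf_{1\le n\le m}\mathcal{H}_{n}(x)\le C\,\mathcal{H}_{m}(x)\le C\|x\|$. For democracy the key building block is: if $P,Q\subset\N$ have $|P|=|Q|=m$ and are separated ($P<Q$ or $P>Q$), then $\|\mathbf{1}_{P}\|\le C\|\mathbf{1}_{Q}\|$. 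I would test the hypothesis on $x=\mathbf{1}_{Q}+\theta\mathbf{1}_{P}$ with $0<\theta<1$: then $\Lambda_{m}(x)=Q$, so $\mathcal{G}_{m}(x)=\mathbf{1}_{Q}$ and $x-\mathcal{G}_{m}(x)=\theta\mathbf{1}_{P}$, while the admissible pair $(\alpha,A)=(\theta,P)$ (admissible precisely because $P$ is separated from $Q=\Lambda_{m}(x)$) gives $\mathcal{H}_{m}(x)\le\|x-\theta\mathbf{1}_{P}\|=\|\mathbf{1}_{Q}\|$; letting $\theta\to1^{-}$ yields the claim. For arbitrary $U,V$ of size $m$ I would insert an auxiliary set $W$ of size $m$ with $\min W>\max(U\cup V)$ and chain the building block through $U<W$ and $W>V$ to get $\|\mathbf{1}_{U}\|\le C^{2}\|\mathbf{1}_{V}\|$, i.e.\ $h_{r}(m)\le C^{2}h_{l}(m)$.

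For the converse, almost-greedy $\Rightarrow$ (inequality), I would first reduce to a level-$n$ estimate. Since for quasi-greedy bases $\mathcal{G}_{m}(x)-\mathcal{G}_{n}(x)=\mathcal{G}_{m-n}(x-\mathcal{G}_{n}(x))$, quasi-greediness gives the monotonicity $\|x-\mathcal{G}_{m}(x)\|\le C_{q}\|x-\mathcal{G}_{n}(x)\|$ for every $n\le m$; hence it suffices to prove $\|x-\mathcal{G}_{n}(x)\|\le C\,\mathcal{H}_{n}(x)$ for all $n$, and then pass to the infimum over $n\le m$. Fixing a near-optimal pair $(\alpha,A)$ for $\mathcal{H}_{n}(x)$, and writing $\Lambda=\Lambda_{n}(x)$, $t_{n}=\min_{k\in\Lambda}|e_{k}^{*}(x)|$, $u=x-\alpha\mathbf{1}_{A}$, $M=\|u\|$, admissibility forces $A\cap\Lambda=\emptyset$ and $\max_{k\in A}|e_{k}^{*}(x)|\le t_{n}$. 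Setting $A_{+}=\{k\in A:\ |e_{k}^{*}(u)|>t_{n}\}$, the set $\Gamma=\Lambda\cup A_{+}$ is a greedy set of $u$, and a coordinatewise check gives the identity $x-\mathcal{G}_{n}(x)=(u-P_{\Gamma}(u))+w$ with $w=P_{A_{+}}(x)+\alpha\mathbf{1}_{A\setminus A_{+}}$; the first summand is controlled by quasi-greediness, $\|u-P_{\Gamma}(u)\|\le C_{q}M$.

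The main obstacle, and the place where the separation hypothesis is genuinely used, is bounding the remainder $w$. Here I would first observe that every coefficient of $w$ has modulus at most $2t_{n}$ (a short case analysis according to whether $|\alpha|\le 2t_{n}$, using $\max_{A}|e_{k}^{*}(x)|\le t_{n}$), so by the max-coefficient/UCC bound for quasi-greedy bases $\|w\|\le C\,t_{n}\,\|\mathbf{1}_{\eta A}\|$ for the relevant sign pattern $\eta$; superdemocracy then gives $\|\mathbf{1}_{\eta A}\|\le C_{d}\|\mathbf{1}_{\varepsilon\Lambda}\|$ since $|A|=|\Lambda|=n$; and the lower truncation estimate for quasi-greedy bases applied to the greedy set $\Gamma$ of $u$ (of threshold $t_{n}$) gives $t_{n}\|\mathbf{1}_{\varepsilon\Lambda}\|\le C'\|u\|=C'M$ (see \cite{DKKT},\cite{AK} for these standard quasi-greedy lemmas). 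Combining yields $\|w\|\le C''M$, hence $\|x-\mathcal{G}_{n}(x)\|\le(C_{q}+C'')M\le(C_{q}+C'')(\mathcal{H}_{n}(x)+\epsilon)$, and letting $\epsilon\to0$ finishes the argument. The delicate points are checking that $\Gamma$ really is a greedy set of $u$ and that the uniform bound $\max_{k}|w_{k}|\le 2t_{n}$ holds: it is exactly this smallness, forced by $A$ avoiding the greedy set, that lets democracy and the truncation inequalities close the estimate.
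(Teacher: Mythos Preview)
The paper does not actually prove Theorem~\ref{thm:DK1}; it is quoted from \cite{DK1} and then used as a black box to derive Corollary~\ref{coro:lastOne}. So there is no ``paper's own proof'' to compare against. That said, your argument is sound.

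For the implication (inequality) $\Rightarrow$ almost-greedy, your proof is essentially the same as the paper's proof of Corollary~\ref{coro:lastOne}: quasi-greediness from $\mathcal{H}_{m}(x)\le\|x\|$, and democracy by testing on perturbations $\mathbf{1}_{Q}+\theta\mathbf{1}_{P}$ and chaining through an auxiliary separated set. This direction is fine as written.

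For the converse, your strategy is correct and the decomposition $x-\mathcal{G}_{n}(x)=(u-P_{\Gamma}(u))+w$ with $\Gamma=\Lambda\cup A_{+}$ works. A few points deserve a line of justification when you write it out in full. First, $\Gamma$ is only a \emph{weak} greedy set of $u$ (ties at level $t_{n}$ are possible); the quasi-greedy bound $\|u-P_{\Gamma}(u)\|\le C_{q}\|u\|$ still holds for such sets, but say so explicitly. Second, the case split ``$|\alpha|\le 2t_{n}$ vs.\ $|\alpha|>2t_{n}$'' is exactly right: in the latter case every $k\in A$ satisfies $|e_{k}^{*}(u)|\ge|\alpha|-t_{n}>t_{n}$, forcing $A_{+}=A$ and hence $w=P_{A}(x)$. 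Third, in the final chain you apply the truncation lemma to $\Gamma$ (a greedy set of $u$), obtaining $t_{n}\|\mathbf{1}_{\varepsilon\Gamma}\|\le C'\|u\|$; to pass from $\|\mathbf{1}_{\eta A}\|$ (or $\|\mathbf{1}_{\varepsilon\Lambda}\|$) to $\|\mathbf{1}_{\varepsilon\Gamma}\|$ you need the quasi-greedy subset/UCC property $\|\mathbf{1}_{\eta B}\|\le C\|\mathbf{1}_{\eta' B'}\|$ for $B\subset B'$, together with superdemocracy to compare $A$ and $\Lambda$. All of these are standard consequences of quasi-greediness recorded in \cite{DKKT} and \cite{AK}, so the argument closes; just make the citations precise rather than leaving ``see \cite{DKKT},\cite{AK}'' as a blanket reference.
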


Inspiring on the previous theorem , we can prove the following result which is again strinking as $\mathcal{D}_{m}(x) \leq \mathcal{H}_{m}(x)$ and so $\liminf{\mathcal{H}_{m}(x)} \approx \| x\|$ when $h_{l}(m)$ and $h_{r}(m)$ are comparable by Theorem \ref{thm:main}.

\begin{cor}\label{coro:lastOne}
Let $\mathcal{B}$ be a basis of a Banach space $\mathbb{X}$. Then, $\mathcal{B}$ is almost-greedy if and only if there exists $C > 0$ such that  
\begin{equation}\label{equa:coroLastOneAux1} 
\| x - \mathcal{G}_{m}(x)\| \leq C \, \mathcal{H}_{m}(x) \quad  \mbox{ for every } x \in \mathbb{X} \mbox{ and  } m \in \N.
\end{equation}
\end{cor}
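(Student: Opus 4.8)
The plan is to prove the two implications separately, noting that the forward direction is essentially free while all the work lies in the converse. For the implication \emph{almost-greedy} $\Rightarrow$ \eqref{equa:coroLastOneAux1}, I would simply invoke Theorem \ref{thm:DK1}: since $\inf_{1\le n\le m}\mathcal{H}_n(x)\le \mathcal{H}_m(x)$, the bound $\|x-\mathcal{G}_m(x)\|\le C\inf_{1\le n\le m}\mathcal{H}_n(x)$ immediately yields $\|x-\mathcal{G}_m(x)\|\le C\mathcal{H}_m(x)$. This direction therefore requires no further argument.

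For the converse I would show that \eqref{equa:coroLastOneAux1} forces $\mathcal{B}$ to be both quasi-greedy and democratic, and then conclude via the characterization of \cite{DKKT} (almost-greedy $=$ quasi-greedy $+$ democratic). Quasi-greediness is immediate: given $x$ and $m$, choose any set $A$ with $|A|=m$ lying entirely beyond $\max\Lambda_m(x)$ (possible since $\N$ is infinite) and take $\alpha=0$ in the definition of $\mathcal{H}_m$, which gives $\mathcal{H}_m(x)\le\|x\|$; the hypothesis then reads $\|x-\mathcal{G}_m(x)\|\le C\|x\|$.

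The core of the proof is democracy, and the key is a \emph{one-sided} comparison: if $P$ and $Q$ are disjoint sets with $|P|=|Q|=m$ that are \emph{separated} (one lies entirely before the other), then $\|\mathbf{1}_P\|\le C\|\mathbf{1}_Q\|$. To see this I would set $x=(1+\delta)\mathbf{1}_Q+\mathbf{1}_P$ for small $\delta>0$; since the coefficients on $Q$ strictly dominate those on $P$, the greedy set is $\Lambda_m(x)=Q$ and $x-\mathcal{G}_m(x)=\mathbf{1}_P$. Because $P$ is separated from $Q$, it is an admissible test set in $\mathcal{H}_m(x)$, so taking $\alpha=1$ gives $\mathcal{H}_m(x)\le\|(1+\delta)\mathbf{1}_Q\|=(1+\delta)\|\mathbf{1}_Q\|$. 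The hypothesis then yields $\|\mathbf{1}_P\|\le C(1+\delta)\|\mathbf{1}_Q\|$, and letting $\delta\to 0^{+}$ gives the claim.

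To upgrade this to full democracy for \emph{arbitrary} $A,B$ with $|A|=|B|=m$ (which may overlap or interleave, so that the construction above does not apply directly), I would introduce an auxiliary set $S$ of cardinality $m$ placed entirely beyond both $A$ and $B$. Applying the one-sided comparison twice, once to the separated pair $(A,S)$ and once to $(S,B)$, gives $\|\mathbf{1}_A\|\le C\|\mathbf{1}_S\|\le C^2\|\mathbf{1}_B\|$, whence $h_r(m)\le C^2 h_l(m)$ after taking the supremum over $A$ and the infimum over $B$. The main obstacle, and the only genuinely delicate point, is precisely this reduction from separated pairs to arbitrary pairs: the ordering constraint built into $\mathcal{H}_m$ means the naive construction controls only separated sets, and the intermediary set $S$ is what bridges the gap without presupposing any democracy.
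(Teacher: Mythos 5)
Your proposal is correct and follows essentially the same route as the paper: the forward direction via Theorem \ref{thm:DK1} and the inequality $\inf_{1\le n\le m}\mathcal{H}_n(x)\le\mathcal{H}_m(x)$, quasi-greediness from $\mathcal{H}_m(x)\le\|x\|$, and democracy via an auxiliary set placed beyond both $A$ and $B$ with the perturbed test vectors $\mathbf{1}_A+(1+\delta)\mathbf{1}_S$ and $\mathbf{1}_S+(1+\delta)\mathbf{1}_B$, yielding $h_r(m)\le C^2 h_l(m)$. Your ``one-sided comparison'' lemma is exactly the paper's two estimates, just stated as a standalone step.
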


\begin{proof}
If $\mathcal{B}$ is quasi-greedy then \ref{equa:coroLastOneAux1}  holds by Theorem \ref{thm:DK1}. To see the converse we use the aforementioned characterization of almost-greedy bases as those being quasi-greedy and democratic. The fact that $\mathcal{B}$ is quasi-greedy follows from the hypothesis and the trivial inequality $\mathcal{H}_{m}(x) \leq \| x\|$.	Let us show that $\mathcal{B}$ is democratic. Let $A,B \subset \N$ be finite subsets of cardinality $m$, and take $E \subset \N$ also with $|E| = m$ and moreover $A < E$ and $B < E$. Fixed $\delta > 0$ consider the elements $x=\mathbf{1}_{A} + (1 + \delta) \mathbf{1}_{E}$ and $y =  \mathbf{1}_{E} + (1+ \delta) \mathbf{1}_{B}$. Then,
\[ \|\mathbf{1}_{A}\| = \| x - \mathbf{1}_{E}\| = \| x - \mathcal{G}_{m}(x)\| \leq C \, \mathcal{H}_{m}(x) \leq C \, \|x -  \mathbf{1}_{A}\| = C \,(1+ \delta) \, \| \mathbf{1}_{E}\|\,. \]
Analogously,
\[ \|\mathbf{1}_{E}\| =  \| y - \mathbf{1}_{B}\| = \| y - \mathcal{G}_{m}(y)\| \leq C \, \mathcal{H}_{m}(y) \leq C \|y -  \mathbf{1}_{E}\| = C \, (1+ \delta) \, \| \mathbf{1}_{B}\|\,. \]
Since $\delta>0$ was arbitraty, we conclude that  $h_{r}(m) \leq C^{2} \, h_{l}(m)$ for every $m \in \N$, and so the basis is democratic.
\end{proof}


\begin{thebibliography}{99}
 	
 	\bibitem{AK}
\textsc{F. Albiac and N. J. Kalton}, 
\textit{Topics in Banach space theory}, 
2nd revised and up- dated edition, 
Graduate Texts in Mathematics, vol. 233, 
Springer International Publishing, 2016.
 		
 	\bibitem{BB} 
 	\textsc{P. M. Bern\'a, \'O. Blasco}, 
 	\textit{Characterization of greedy bases in Banach spaces}, 
 	J. Approx. Theory, \textbf{205} (2017), 28-39.
 	
 	\bibitem{BB2} 
 	\textsc{P. M. Bern\'a, \'O. Blasco}, 
 	\textit{The best m-term approximation with respect to polynomials with constant coefficients}, 
 	Anal. Math., \textbf{43} (2) (2017), 119-132.

 		\bibitem{BBGHO} 
 		\textsc{P. M. Bern\'a, \'O. Blasco, G. Garrig\'os, E. Hern\'andez, T. Oikhberg}, 
 		\textit{Embeddings and Lebesgue-type Inequalities for the Greedy Algorithm in Banach spaces}, 
 		Constr. Approx. \url{https://doi.org/10.1007/s00365-018-9415-9}.
 			
 	\bibitem{DKKT}
 	\textsc{S. J. Dilworth, N. J. Kalton, D. Kutzarova, V. N. Temlyakov}, 
 	\textit{The thresholding greedy algorithm, greedy bases, and duality}, 
 	Constr.Approx. \textbf{19} (2003), no.4, 575-597.
 	
 	\bibitem{DK1} 
 	\textsc{S. J. Dilworth, D. Khurana}, 
 	\textit {Characterization of almost greedy and partially greedy bases}, 
 	\url{https://arxiv.org/pdf/1805.06778.pdf}.

 	\bibitem{H} 
 	\textsc{C. Heil}, 
 	\textit{A Basis Theory Primer}, 
 	Expanded edition, Birkh\"{a}user (2011).
 	
 		\bibitem{L} 
 	\textsc{J. Lindenstrauss}, 
 	\textit{On a certain subspace of $\ell^1$}, 
 	Bull. Acad. Polon. Sci. \textbf{12}, 539-542 (1964).
 	
 	 		\bibitem{AT} 
 		\textsc{A. Kamont, V. N. Temlyakov}, 
 		\textit{Greedy approximation and the multivariate Haar system}, 
 		Studia Math, 161 (3), (2004), 199–223. 	
 	
 	\bibitem{KT}
 	\textsc{S. V. Konyagin, V. N. Temlyakov}, 
 	\textit{A remark on greedy approximation in Banach spaces}, 
 	East J. Approx. \textbf{5} (1999), 365-379.
 	

 	\bibitem{S} 
 	\textsc{I. Singer}, 
 	\textit{Bases in Banach spaces}, 
 	vol. I. Springer, New York (1970).
 	
 	\bibitem{Woj} 
 	\textsc{P. Wojtaszczyk}, 
 	\textit{Greedy algorithm for general biorthogonal systems}, 
 	J.Approx.Theory \textbf{107} (2000), no.2, 293-314.
 	
 \end{thebibliography}
\end{document}